\numberwithin{equation}{section}
\newtheorem{theorem}{Theorem}[section]
\newtheorem{lemma}[theorem]{Lemma}
\newtheorem{proposition}[theorem]{Proposition}
\newtheorem{remark}[theorem]{Remark}
\renewcommand\tilde{\widetilde}
\def\R{\mathbb{R}}
\def\E{\mathcal{E}}
\def\H{\mathcal{H}}
\renewcommand{\phi}{\varphi}
\def\Div{\textup{div}\,}
\def\1{\mathbf{1}}
\def\Xint#1{\mathchoice
{\XXint\displaystyle\textstyle{#1}}%
{\XXint\textstyle\scriptstyle{#1}}%
{\XXint\scriptstyle\scriptscriptstyle{#1}}%
{\XXint\scriptscriptstyle\scriptscriptstyle{#1}}%
\!\int}
\def\XXint#1#2#3{{\setbox0=\hbox{$#1{#2#3}{\int}$ }
\vcenter{\hbox{$#2#3$ }}\kern-.57\wd0}}
\def\dashint{\Xint-}
\def\eps{\varepsilon}
\renewcommand{\subset}{\subseteq}
\def\lt{\left}
\def\rt{\right}
\def\les{\lesssim}
\def\ges{\gtrsim}
\def\E{\mathcal{E}}
\def\Bf{\overline{f}}
\def\supp{\textup{spt}\,}
\def\delrho{\delta\!\rho}
\begin{document}
\title{A variational proof of partial regularity for optimal transportation maps}
\author{M. Goldman \and F. Otto} 
\maketitle
\begin{abstract}\noindent
We provide a new proof of the known partial regularity result for the optimal transportation map (Brenier map) between two H\"older continuous densities.
Contrary to the existing regularity theory for the Monge-Amp\`ere equation, which is based on the maximum principle, our approach is purely variational. 
By constructing a competitor on the level
of the Eulerian (Benamou-Brenier) formulation, we show that locally, the velocity is close to the gradient of a harmonic function 
provided the transportation cost is small. We then translate back to the Lagrangian description and perform a Campanato
iteration to obtain an $\eps$-regularity result.
\end{abstract}

\section{Introduction}

For $\alpha\in(0,1)$, let $\rho_0$ and $\rho_1$ be two  probability densities with bounded support which are  $C^{0,\alpha}$ continuous, bounded and bounded away from zero on their support
and let $T$ be the solution of the optimal transportation problem
\begin{equation}\label{eq:introprobOT}
 \min_{T\sharp \rho_0=\rho_1} \int_{\R^d} |T(x)-x|^2 \rho_0(x)dx,
\end{equation}
where with a slight abuse of notation $T\sharp \rho_0$ denotes the push-forward by $T$ of the measure $\rho_0 dx$ 
(existence and characterization of $T$ as the gradient of a convex function $\psi$
are given by Brenier's Theorem, see \cite[Th. 2.12]{Viltop}). Our main result is a partial regularity theorem for $T$:
\begin{theorem}\label{theomainintro}
There exist open sets $E\subset \supp \rho_0$ and $F\subset \supp \rho_1$ of full measure such that $T$ is a $C^{1,\alpha}$-diffeomorphism between $E$ and $F$.
\end{theorem}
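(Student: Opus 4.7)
The plan is to reduce Theorem~\ref{theomainintro} to an $\eps$-regularity statement: there exists $\eps_0>0$ such that if $0\in\supp\rho_0$, the normalizations $\rho_0(0)=\rho_1(0)=1$ and $T(0)=0$ hold, and the local excess
\[
 E(R) := \frac{1}{R^{d+2}}\int_{B_R}|T(x)-x|^2\rho_0(x)\dv x
 + R^{2\alpha}\bigl(\|\rho_0-1\|_{C^{0,\alpha}(B_R)}^2+\|\rho_1-1\|_{C^{0,\alpha}(B_R)}^2\bigr)
\]
satisfies $E(R)\le\eps_0$, then $T\in C^{1,\alpha}(B_{R/2})$. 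The set of such base points is open by construction, and its complement in $\supp\rho_0$ has zero measure because Alexandrov's theorem gives a.e.\ differentiability of the Brenier potential $\psi$; at every such point, pre- and post-composing $T$ with affine maps chosen from the differential brings us into the small-excess regime at sufficiently fine scales. The corresponding statement applied to $T^{-1}=\nabla\psi^*$ produces the dual open set $F$ and the diffeomorphism property.

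The heart of the argument is a \emph{one-step improvement}: if $E(R)\le\eps_0$, then at scale $\theta R$ (with $\theta\in(0,1)$ fixed) the excess measured around some affine map $x\mapsto Ax+b$ ($A$ symmetric, close to the identity) decays by a factor $\theta^{2\alpha}$ modulo a controlled Hölder contribution from the densities. I would establish this by \emph{harmonic approximation} carried out in the Eulerian (Benamou--Brenier) picture. The displacement interpolation $T_t=(1-t)\mathrm{Id}+tT$, $\rho_t=(T_t)_\sharp\rho_0$, $v_t=(T-\mathrm{Id})\circ T_t^{-1}$ solves the continuity equation $\partial_t\rho_t+\Div(\rho_t v_t)=0$ and minimizes $\int_0^1\!\int|v|^2\rho\dv x\dv t$ among all admissible pairs with these endpoint densities. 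Against this minimizer I would test a competitor $(\tilde\rho_t,\tilde v_t)$ obtained by flattening $\rho_0,\rho_1$ to the constant density in a thin boundary layer by small transverse flows, and then, in the interior of the ball, using $\tilde v_t=\nabla\phi$ where $\phi$ solves a Neumann problem $\Delta\phi=0$ with boundary flux prescribed by the trace of $v_t$. Exploiting the minimality of $(v_t,\rho_t)$ and the Hölder control on the densities yields
\[
 \int_0^1\!\int_{B_{R/2}} |v_t-\nabla\phi|^2\rho_t\dv x\dv t \;\lesssim\; R^{d+2}\bigl(o(E(R))+R^{2\alpha}\bigr),
\]
and standard interior estimates for harmonic functions then transfer smallness from scale $R$ to scale $\theta R$, once converted back to the Lagrangian side via $T(x)-x=\int_0^1 v_t(T_t(x))\dv t$.

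Iterating the decay in Campanato style produces a sequence of affine maps $\ell_k$ converging at rate $\theta^{k(1+\alpha)}$ to some $\ell_0(x)=A_0x+b_0$, with
\[
 \dashint_{B_r}|T-\ell_0|^2 \;\le\; C r^{2+2\alpha}\qquad\text{for all } r\le R/2.
\]
The monotonicity of $T=\nabla\psi$ upgrades this integral closeness to pointwise control of $\nabla T$ (a monotone map cannot oscillate much more than its $L^2$-mean), giving $T\in C^{1,\alpha}(B_{R/4})$. Applying the same scheme to the dual potential supplies the regularity of $T^{-1}$.

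The main obstacle is the Eulerian competitor construction behind the harmonic approximation. One must interpolate the minimizing flow with a harmonic flow across boundary layers while keeping the continuity equation exactly satisfied and the prescribed boundary densities exactly matched, and the extra cost paid in these layers must come out of order $E(R)+R^{2\alpha}$ rather than merely $\sqrt{E(R)}$. Simultaneously controlling the quadratic linearization error, the flattening cost in the boundary layer, and the trace of the Eulerian velocity—without invoking any a priori regularity of $T$ or a maximum principle—is precisely the nontrivial calculation that distinguishes this variational approach from the classical Caffarelli/De Philippis--Figalli proof.
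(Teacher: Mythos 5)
Your overall architecture matches the paper's: Alexandrov differentiability to reduce to an $\eps$-regularity statement, affine normalization built from the a.e.\ differential of the Brenier potential, harmonic approximation of the Eulerian velocity via a competitor with a boundary-layer correction, and a Campanato iteration using the affine invariance of optimal transport. (One cosmetic difference: you use $\Delta\phi=0$ in the interior and a \emph{temporal} boundary layer near $t=0,1$ to flatten $\rho_0,\rho_1$ to a constant; the paper instead takes the interior density to be the linear interpolation $t\rho_1+(1-t)\rho_0$ and solves a Poisson problem $-\Delta\tilde\phi=\rho_1-\rho_0$, so that the endpoint conditions of the continuity equation hold without any time-layer --- this is cleaner and keeps the density deviations at the $C^{0,\alpha}$-level throughout.)

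There is, however, a genuine gap at the last step. You derive
$\dashint_{B_r}|T-\ell_0|^2\le Cr^{2+2\alpha}$ for all $r\le R/2$, centered at the single base point $0$, and then assert that the monotonicity of $T=\nabla\psi$ ``upgrades this integral closeness to pointwise control of $\nabla T$,'' yielding $T\in C^{1,\alpha}(B_{R/4})$. Monotonicity does give an $L^\infty$-bound from an $L^2$-bound, but with the wrong exponent: the relevant estimate (the paper's Lemma~\ref{lem:Linfty}) has the form
$\sup_{B_{3/4}}|T-x|\lesssim\bigl(\int_{B_1}|T-x|^2\bigr)^{1/(d+2)}$,
and applying it at scale $r$ to $T-\ell_0$ converts $\int_{B_r}|T-\ell_0|^2\lesssim r^{d+2+2\alpha}$ into only $\sup_{B_{r/2}}|T-\ell_0|\lesssim r^{1+2\alpha/(d+2)}$, which is strictly weaker than $r^{1+\alpha}$ whenever $d\ge 1$. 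Decay at one center is therefore not enough, and monotonicity cannot make up the loss. What is actually needed --- and what the paper proves in Proposition~\ref{P3} --- is Campanato decay at \emph{every} center $x_0\in B_R$: one first observes that the small-excess hypothesis at the origin propagates to all nearby base points (the excess at $x_0\in B_R$ over scale $R$ is controlled by the excess at $0$ over scale $2R$), then runs the Campanato iteration at each such $x_0$ (re-normalizing densities via $\rho_0(x_0),\rho_1(x_0)$ each time), and finally invokes Campanato's characterization of $C^{1,\alpha}$ spaces \cite[Th.~5.I]{campanato}. No monotonicity is used in this final conversion; monotonicity enters only earlier, for the qualitative $L^\infty$ bounds of Lemma~\ref{lem:Linfty} and for McCann's displacement convexity estimate \eqref{eq:displconv}, both of which feed into the harmonic-approximation step, not into the Campanato-to-H\"older step.
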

This theorem is a consequence of Alexandrov Theorem \cite[Th. 14.25]{VilOandN} and the following $\eps-$regularity theorem 
(plus a  bootstrap argument):
\begin{theorem}\label{theoepsintro}
 Let $T$ be the minimizer of \eqref{eq:introprobOT} and assume that $\rho_0(0)=\rho_1(0)=1$. There exists $\eps(\alpha,d)$
 such that if\footnote{here  $[\rho]_{\alpha,R}:=\sup_{x,y\in B_R} \frac{|\rho(x)-\rho(y)|}{|x-y|^\alpha}$ denotes the $C^{0,\alpha}-$semi-norm.}
 \[
  \frac{1}{(2R)^{d+2}} \int_{B_{2R}} |T-x|^2 \rho_0dx+\frac{1}{(2R)^{d+2}} \int_{B_{2R}} |T^{-1}-x|^2 \rho_1dx +R^{2\alpha}[\rho_0]_{\alpha,2R}^2+R^{2\alpha}[\rho_1]^2_{\alpha,2R}\le \eps(\alpha),
 \]
then, $T$ is $C^{1,\alpha}$ inside $B_R$.
\end{theorem}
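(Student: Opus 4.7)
I plan to prove the theorem by a Campanato-type iteration fed by a one-step ``harmonic approximation'' lemma established in the dynamic (Benamou-Brenier) picture of optimal transport. For $x_0 \in B_R$ and $r > 0$, let $E(x_0, r)$ denote the sum of the two normalized transportation costs in $B_r(x_0)$ appearing in the hypothesis plus the H\"older term $r^{2\alpha}([\rho_0]_{\alpha,r}^2 + [\rho_1]_{\alpha,r}^2)$. The hypothesis then reads $E(0, 2R) \le \eps$, and the goal is to show $E(x_0, r) \lesssim r^{2\alpha}$ uniformly for $x_0 \in B_R$ and small $r$; Campanato's embedding then gives $T \in C^{1,\alpha}(B_R)$.

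\emph{One-step improvement (Step 1).} The key lemma to prove is: if $E(x_0, r) \le \eps$, then for each small enough $\theta \in (0, \tfrac12)$ there exists a symmetric matrix $A$ with $\det A = 1$ such that, after conjugating the transport problem by $A$ (replacing $T$ by a suitable push-forward under $A$, with correspondingly transformed densities), one has $E(x_0, \theta r) \le C\theta^{2} E(x_0, r) + C r^{2\alpha}$. Choosing $\theta$ so that $C\theta^{2} \le \tfrac12 \theta^{2\alpha}$ converts this into the desired Campanato decay. The matrix $A$ will be (after renormalizing the determinant) the Hessian at $x_0$ of a harmonic function $\phi$ which approximates the ``centered'' Brenier potential $\psi - \tfrac12|x|^2$, whose gradient is the displacement $T - x$.

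\emph{Harmonic approximation (Step 2, the heart of the proof).} To prove the one-step improvement, I would work with the McCann displacement interpolation $(\rho_t, v_t)$, which by Benamou-Brenier minimizes $\int_0^1 \int |v|^2 \rho\, dx\, dt$ subject to $\partial_t \rho + \div(\rho v) = 0$ with endpoints $\rho_0, \rho_1$. When the excess is small, $\rho_t \approx 1$ on $B_r(x_0)$ and the minimization formally linearizes around the trivial solution: the continuity equation becomes $\partial_t(\rho_t - 1) + \div v_t \approx 0$, and the Euler-Lagrange condition forces $v_t$ to be a gradient $\nabla p_t$. The time-averaged potential $\phi := \int_0^1 p_t\, dt$ then satisfies $\Delta \phi = \rho_0 - \rho_1$, so is harmonic up to a H\"older-controlled error of size $r^{\alpha}$. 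To make this quantitative, I construct an Eulerian competitor by gluing, inside a slightly smaller ball $B_{r'}(x_0)$, the stationary velocity $\nabla \phi$ (extracted from a suitable harmonic extension of the boundary flux measured through $\partial B_{r'}(x_0)$) to the true flow outside, modified in a thin collar so as to preserve mass exactly; comparing its Benamou-Brenier cost with that of the minimizer yields an $L^2$-closeness estimate of the form $\int_0^1 \int_{B_{r'}(x_0)} |v_t - \nabla \phi|^2 \lesssim$ (boundary tail) $+$ (H\"older defect). Passing from Eulerian to Lagrangian via $T(x) - x = \int_0^1 v_t(X_t(x))\, dt$ then identifies $T - x$ with $\nabla \phi$ in $L^2$; taking $A := \nabla^2 \phi(x_0)$ (rescaled) delivers Step 1.

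\emph{Iteration and main obstacle.} Iterating Step 1 at dyadic scales $r_k := \theta^k R$ around $x_0 = 0$ produces $E(0, r_k) \lesssim r_k^{2\alpha}$, provided that the accumulating affine change of coordinates $A^{(1)} \cdots A^{(k)}$ remains uniformly bounded; this follows because $\|A^{(k)} - I\|^{2} \lesssim E(0, r_k)$ is geometrically summable. The same reasoning propagates to every $x_0 \in B_R$ by a covering argument, and Campanato's embedding then concludes. The principal technical obstacle is Step 2: building an admissible Eulerian competitor matching mass \emph{exactly} on $\partial B_{r'}(x_0)$ despite $\rho_0, \rho_1$ being merely H\"older; bounding the quadratic linearization remainder $|v_t|^{2}(\rho_t - 1)$ by the excess, which is itself only quadratic in $v_t$; and converting Eulerian $L^{2}$-closeness to Lagrangian $L^{2}$-closeness of $T - x$, for which the flow $X_t$ must be shown not to distort the measure too much. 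A secondary but necessary piece of book-keeping is tracking how the H\"older semi-norm $[\rho]_{\alpha,r}$ transforms under the conjugation by $A$ at each step, so that the smallness hypothesis reproduces itself at every scale.
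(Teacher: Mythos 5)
Your outline reproduces the paper's strategy faithfully at every structural level: the Benamou--Brenier reformulation, the Eulerian competitor glued from a Neumann solution in the interior and a mass-correcting boundary layer in an annulus, the passage back to the Lagrangian picture, the tilting by a determinant-one symmetric matrix $B$ built from $\nabla^2\phi(x_0)$, and the Campanato iteration with affine-invariance bookkeeping are all exactly what Propositions \ref{Prop:BBharm}, \ref{prop:Lagestim}, \ref{iter} and \ref{P3} do.

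The one item you correctly flag as the ``principal technical obstacle'' but do not resolve is in fact the crux, and it is worth naming how the paper handles it. You note that the linearization remainder involves $|v_t|^2(\rho_t-1)$ and that small excess alone is ``only quadratic in $v_t$'' and so does not directly yield an $L^\infty$ control on $\rho_t-1$. Indeed, small Benamou--Brenier cost gives no pointwise bound on $\rho_t$, and the quasi-orthogonality inequality
\[
\int_0^1\!\!\int \tfrac{1}{\rho}|j-\rho\nabla\tilde\phi|^2 \;\le\; \int_0^1\!\!\int \tfrac{1}{\rho}|j|^2 - (1-\gamma)\int|\nabla\tilde\phi|^2
\]
needs $\sup\rho_t \le 1+\gamma$. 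The paper obtains this from a localized form of McCann's displacement convexity (eq.~\eqref{eq:displconv}): concavity of $\det(\cdot)^{1/d}$ along the interpolation $T_t=\mathrm{id}+t(T-\mathrm{id})$ gives $\rho_t(x)\le \rho_0(T_t^{-1}x)^{1-t}\rho_1(T(T_t^{-1}x))^t$, which with the H\"older control and the $L^\infty$ bound of Lemma \ref{lem:Linfty} yields $\rho_t\le 1+[\rho_0]_\alpha+[\rho_1]_\alpha$ on $B_{1/2}$. This is the single non-variational input the paper relies on, and without it the cost comparison does not close. Similarly, the ``thin collar'' mass-fixing construction you gesture at is made precise via a duality argument (Lemma \ref{lemLambda}) that reduces to the space--time trace estimate \eqref{trace}; this is what produces the critical boundary-layer exponent $r^{1/(d+1)}$ and hence the strictly superlinear gain $\E^{(d+2)/(d+1)}$ needed to beat the linear term in the Campanato iteration. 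Your sketch as written would not yet compile into a proof without supplying both of these ingredients.
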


Theorem \ref{theomainintro} was already obtained by Figalli and Kim \cite{FigKim} (see also \cite{DePFig} for a far-reaching generalization),
but our proof departs from the usual scheme for proving regularity for the Monge-Amp\`ere equation. 
Indeed, while most proofs use some variants of the maximum principle, our proof is variational. The classical approach operates on the level of the 
convex potential $\psi$ and the ground-breaking paper in that respect is Caffarelli's \cite{CafAoM90bis}: By comparison with simple barriers it is shown that an
Alexandrov (and thus viscosity) solution $\psi$ to the Monge-Amp\`+ere equation is $C^1$,
provided its convexity does not degenerate along a line crossing the entire domain of definition. 
The same author shows in \cite{CafJAMS92} by similar arguments that the potential $\psi$ of the Brenier map is a strictly convex Alexandrov solution,
and thus regular, provided the target domain $\supp\rho_1$ is convex. The challenge of the $\eps-$regularity theorem in \cite{FigKim} is to follow
the above line of arguments while avoiding the notion of Alexandrov solution, that is, without having access to the comparison argument by below.
The $\eps-$regularity theorem in \cite{FigKim} in turn is used by Figalli and De Philippis as the core for a generalization to general cost functions
by means of a Campanato iteration.
On the contrary to these papers, we work directly at the level of the optimal transportation map $T$, and besides the $L^\infty$ bound \eqref{eq:displconv} 
given by McCann's displacement convexity, we only use variational arguments. The main idea behind the proof is the well-known fact that
the linearization of the Monge-Amp\`ere equation gives rise to the Laplace equation \cite[Sec. 7.6]{Viltop}. 
We prove that if the energy in a given ball is small enough, then in the half-sized ball, $T$ is close to the gradient of harmonic function 
(see Proposition \ref{prop:Lagestim}). 
This result is actually established at the Eulerian level
(i.e. for the solutions of the Benamou-Brenier formulation of optimal transportation, see \cite[Th. 8.1]{Viltop}  or \cite[Chap. 8]{AGS}),
see Proposition \ref{Prop:BBharm}. It is for this result that we need the outcome of McCann's displacement convexity, cf.  \eqref{eq:displconv}, 
since it is required for the quasi-orthogonality property \eqref{secondreduc}. Our argument is variational and proceeds by 
defining a competitor based on the solution of a Poisson equation with suitable flux boundary conditions, 
and a boundary-layer construction. The boundary-layer construction is carried out in Lemma \ref{lemLambda};
by a duality argument it reduces to the trace estimate \eqref{trace}. This part of the proof is reminiscent of arguments from \cite{ACO}.
Once we have this approximation result, using that by classical elliptic regularity,  harmonic functions are close to their second-order Taylor expansion, 
we establish ``improvement of flatness by tilting'', see Proposition \ref{iter}. This means that if the energy in a given ball is small
then, up to a change of coordinates, the energy has a geometric decay on a smaller scale.    
The last step is to perform a Campanato iteration of this one-step improvement. This is done in Proposition \ref{P3}, 
where we use our last fundamental ingredient, namely the invariance of the variational problem under affine transformations.
This entire approach to $\eps$-regularity is guided by De Giorgi's strategy for minimal surfaces (see \cite{Maggi} for instance).\\
Let us notice that because of the natural scaling of the problem, our Campanato iteration operates directly at the $C^{1,\alpha}$-level for $T$,
as opposed to \cite{FigKim,DePFig}, where $C^{0,\alpha}$-regularity is obtained first. \\

The plan of the paper is the following. In Section \ref{sec:not} we gather some notation that we will use throughout the paper. 
Then, in Section \ref{sec:prelim}, we recall some well-known facts
about the Poisson equation and then prove estimate \eqref{eq:estimminLambda}, the proof of which is based on the trace estimate \eqref{trace}. 
In the final section, we prove Theorem \ref{theoepsintro} and then Theorem \ref{theomainintro}.\\

Motivated by applications to the optimal matching problem, we are currently working together with M. Huesmann on the extension of Proposition \ref{prop:Lagestim}  to arbitrary target measures. 
A previous version of this paper treating the simpler case of transportation between sets is available on our webpages.
Since  the proofs are more streamlined there, we recommend to read it first.   

\section{Notation}\label{sec:not}
In the paper we will use the following notation. The symbols $\sim$, $\ges$, $\les$ indicate estimates that hold up to a global constant $C$,
which typically only depends on the dimension $d$ and the H\"older exponent $\alpha$ (if applicable). 
For instance, $f\les g$ means that there exists such a constant with $f\le Cg$,
$f\sim g$ means $f\les g$ and $g\les f$. An assumption of the form $f\ll1$ means that there exists $\eps>0$, typically only
depending on dimension and the H\"older exponent, such that if $f\le\eps$, 
then the conclusion holds.  
We write $|E|$ for the  Lebesgue measure of a set $E$. Inclusions will always be understood as holding up to a set of Lebesgue measure zero, 
that is for two sets $E$ and $F$, $E\subset F$ means that $|E\backslash F|=0$.   
When no confusion is possible, we will drop the integration measures in the integrals. 
For $R>0$ and $x_0\in \R^d$, $B_R(x_0)$ denotes the ball of radius $R$ centered in $x_0$. 
When $x_0=0$, we will simply write $B_R$ for $B_R(0)$. We will also use the notation
\[\dashint_{B_R} f:=\frac{1}{|B_R|}\int_{B_R} f.\]
For a function $\rho$ defined on a ball $B_R$ we introduce the H\"older semi-norm
of exponent $\alpha\in(0,1)$
\[
 [\rho]_{\alpha,R}:=\sup_{x\not=y\in B_R}\frac{|\rho(x)-\rho(y)|}{|x-y|^\alpha}.
\]

\section{Preliminaries}\label{sec:prelim}
In this section, we first recall some well-known estimates for harmonic functions. 
\begin{lemma}\label{lem:harm}
Given $f\in L^{2}(\partial B_1)$ with average zero, we consider a solution $\phi$ of 
\begin{equation}\label{poisson}
 \begin{cases}
  -\Delta \phi=0 & \textrm{in } B_1\\
  \frac{\partial \phi}{\partial \nu}= f & \textrm{on } \partial B_1, 
 \end{cases} 
\end{equation}
where $\nu$ denotes the outer normal to $\partial B_1$. We have 
\begin{equation}\label{energieestimphi}
 \int_{B_1} |\nabla \phi|^2\les \int_{\partial B_1} f^2,
\end{equation}

\begin{equation}\label{Schauder}
\sup_{B_{1/2}}\big(|\nabla^3\phi|^2+|\nabla^2\phi|^2+|\nabla\phi|^2\big)  \les \int_{ B_1} |\nabla \phi|^2,
\end{equation}
and for every $r\le 1$, letting $A_r:=B_1\backslash B_{1-r}$,
\begin{equation}\label{estimaPoisannulus}
 \int_{A_r} |\nabla \phi|^2\les r\int_{\partial B_1} f^2.
\end{equation}

\end{lemma}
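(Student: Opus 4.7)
The plan is to establish the three estimates in turn, each by fairly standard tools.

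For \eqref{energieestimphi}, I would test \eqref{poisson} against $\phi$ itself, normalized to have zero average (solvability is ensured by $\int_{\partial B_1} f = 0$). Integration by parts yields $\int_{B_1}|\nabla\phi|^2 = \int_{\partial B_1} f\phi$, and Cauchy--Schwarz combined with the trace--Poincar\'e inequality $\|\phi\|_{L^2(\partial B_1)}\lesssim \|\nabla\phi\|_{L^2(B_1)}$ (valid for mean-zero $\phi$) delivers \eqref{energieestimphi}.

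For \eqref{Schauder}, I would use that any partial derivative $\partial^\beta\phi$ of a harmonic function is again harmonic on $B_1$. Standard interior $L^\infty$--$L^2$ bounds for harmonic functions (via the mean value property) give $\sup_{B_{1/2}}|\partial^\beta \phi|^2 \lesssim \int_{B_{3/4}}|\partial^\beta\phi|^2$, and iterated Caccioppoli inequalities on nested concentric balls between $B_{3/4}$ and $B_1$ then absorb the higher derivatives into $\int_{B_1}|\nabla\phi|^2$ for $|\beta|=1,2,3$.

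The main step, and the one that requires care, is the boundary-layer estimate \eqref{estimaPoisannulus}. My approach would be to expand in spherical harmonics: with $\{Y_k\}$ an $L^2(\partial B_1)$-orthonormal basis of spherical harmonics of degrees $k\ge 1$ (the $k=0$ mode is absent because $\int_{\partial B_1} f=0$), write $\phi(x) = \sum_{k\ge 1} c_k |x|^k Y_k(x/|x|)$. Then $f = \sum_k k c_k Y_k$, so that $\int_{\partial B_1} f^2 = \sum_k k^2 c_k^2$, while a direct computation of $|\nabla\phi|^2$ in polar coordinates contributes per mode an amount
\[
 c_k^2\, k\bigl(1-(1-r)^{2k+d-2}\bigr)
\]
to $\int_{A_r}|\nabla\phi|^2$. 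Bernoulli's inequality $1-(1-r)^n\le nr$ for $n\ge 1$ bounds each such term by $\lesssim c_k^2 k^2 r$, and summing in $k$ gives \eqref{estimaPoisannulus}. The subtle point is precisely that this per-mode estimate must be uniform in $k$: low modes carry energy proportional to the thickness $r$, while high modes, already concentrated near the boundary, are tamed by the extra factor of $k$ in $\|f\|_{L^2(\partial B_1)}$ compared to $\|\nabla\phi\|_{L^2(B_1)}$.
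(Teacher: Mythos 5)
Your proofs of \eqref{energieestimphi} and \eqref{Schauder} follow essentially the same route as the paper: testing against the mean-zero $\phi$ with Cauchy--Schwarz plus trace--Poincar\'e for the first, and interior regularity of harmonic functions (mean value plus Caccioppoli, or directly the mean-value property applied to derivatives of $\phi$) for the second.

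For \eqref{estimaPoisannulus} your argument is correct but genuinely different. The paper first observes that $|\nabla\phi|^2$ is sub-harmonic (Bochner), which gives the monotonicity $\int_{\partial B_\rho}|\nabla\phi|^2\le\int_{\partial B_1}|\nabla\phi|^2$ for $\rho\le 1$; integrating over $\rho\in(1-r,1)$ then yields $\int_{A_r}|\nabla\phi|^2\le r\int_{\partial B_1}|\nabla\phi|^2$, and the Pohozaev identity \eqref{pohozaev} together with \eqref{energieestimphi} converts $\int_{\partial B_1}|\nabla\phi|^2$ into $\int_{\partial B_1}f^2$. Your spherical-harmonics expansion instead computes everything mode by mode: with $\phi=\sum c_k|x|^{k}Y_k$ one gets $\int_{A_r}|\nabla\phi_k|^2=c_k^2\,k\bigl(1-(1-r)^{2k+d-2}\bigr)$ and $\int_{\partial B_1}f^2=\sum k^2c_k^2$, and Bernoulli's inequality with $k(2k+d-2)\lesssim k^2$ closes the argument. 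Your diagnosis of the crux is exactly right: one needs a per-mode bound that is uniform in $k$, balancing low modes (energy spread out, only $O(r)$ in the annulus) against high modes (energy concentrated near the boundary, controlled by the extra factor $k$ in $f$). The paper's sub-harmonicity/Pohozaev route encodes this trade-off implicitly and would adapt to other geometries; your expansion is specific to the ball but makes the mechanism and the constants completely explicit, and sidesteps having to justify a priori that $\nabla\phi$ has an $L^2$ trace on $\partial B_1$.
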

\begin{proof}
We start with \eqref{energieestimphi}. Changing $\phi$ by an additive constant, we may assume that $\int_{ B_1} \phi=0$. Testing
(\ref{poisson}) with $\phi$, we obtain
 \begin{align*}
  \int_{B_1}|\nabla \phi|^2&=\int_{\partial B_1} f \phi \\
  &\le \lt(\int_{\partial B_1} f^2\rt)^{1/2} \lt(\int_{\partial B_1} \phi^2\rt)^{1/2}\\
  &\les  \lt(\int_{\partial B_1} f^2\rt)^{1/2} \lt(\int_{ B_1} |\nabla \phi|^2\rt)^{1/2},
 \end{align*}
where we used the trace estimate in conjunction with Poincar\'e's estimate for mean-value zero. This yields \eqref{energieestimphi}.\\
Estimate \eqref{Schauder} follows from the mean-value property of harmonic functions applied to $\nabla\phi$ and its derivatives.\\
We finally turn to \eqref{estimaPoisannulus}. 
By sub-harmonicity of $|\nabla\phi|^2$ (which can for instance be inferred from the Bochner formula), we have the mean-value property in the form
\[
 \int_{\partial B_r} |\nabla \phi|^2\le \int_{\partial B_1} |\nabla \phi|^2\quad\mbox{for}\;r\le 1.
\]
Integrating this inequality between $r$ and $1$, using Pohozaev identity, that is,
\begin{equation}\label{pohozaev}
(d-2)\int_{B_1} |\nabla \phi|^2= \int_{\partial B_1} \lt|\nabla_\tau \phi\rt|^2
- \int_{\partial B_1} \lt(\frac{\partial \phi}{\partial \nu}\rt)^2,
\end{equation}
where $\nabla_\tau$ is the tangential part of the gradient of $\phi$, and \eqref{energieestimphi}, we obtain \eqref{estimaPoisannulus}.
\end{proof}
We also need similar estimates for solutions of Poisson equation.
\begin{lemma}\label{lem:Poi}
Given $g\in C^{0,\alpha}(\overline{B}_1)$ such that $g(0)=0$,
we consider a solution $\phi$ of 
\begin{equation}\label{Poi:poisson}
 \begin{cases}
  -\Delta \phi=g & \textrm{in } B_1\\
  \frac{\partial \phi}{\partial \nu}= -\frac{1}{\H^{d-1}(\partial B_1)}\int_{B_1} g & \textrm{on } \partial B_1, 
 \end{cases} 
\end{equation}
where $\nu$ denotes the outer normal to $\partial B_1$. We have 
\begin{equation}\label{Poi:Schauder}
\sup_{B_{1}}\big(|\nabla^2\phi|^2+|\nabla\phi|^2\big)  \les [g]_{\alpha,1}^2.
\end{equation}
In particular,
\begin{equation}\label{Poi:energieestimphi}
 \int_{B_1} |\nabla \phi|^2\les [g]_{\alpha,1}^2,
\end{equation}
and  letting for $r\le1$, $A_r:=B_1\backslash B_{1-r}$, it holds
\begin{equation}\label{Poi:estimaPoisannulus}
 \int_{A_r} |\nabla \phi|^2\les r[g]_{\alpha,1}^2.
\end{equation}

\end{lemma}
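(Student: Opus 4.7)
The plan is to derive all three estimates from a single application of up-to-the-boundary Schauder theory for the Neumann Laplacian. The key observation is that the hypothesis $g(0)=0$, combined with $g\in C^{0,\alpha}(\overline{B_1})$, gives
\[
 \|g\|_{L^\infty(B_1)}\;\le\;[g]_{\alpha,1},\qquad\text{hence}\qquad \|g\|_{C^{0,\alpha}(\overline{B_1})}\;\lesssim\;[g]_{\alpha,1}.
\]
In particular the constant Neumann datum $f:=-\frac{1}{\H^{d-1}(\partial B_1)}\int_{B_1}g$ is bounded in absolute value by $[g]_{\alpha,1}$. Note that $f$ has been chosen precisely so that the compatibility condition $\int_{\partial B_1} f=-\int_{B_1}g$ (forced by the divergence theorem) is satisfied, so that \eqref{Poi:poisson} is indeed solvable.

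Fixing the additive constant in $\phi$ by e.g. imposing $\int_{B_1}\phi=0$, classical Schauder estimates up to the boundary for the Neumann problem on the smooth domain $B_1$ then give
\[
 \|\phi\|_{C^{2,\alpha}(\overline{B_1})}\;\lesssim\;\|g\|_{C^{0,\alpha}(\overline{B_1})}+\|f\|_{C^{1,\alpha}(\partial B_1)}\;\lesssim\; [g]_{\alpha,1},
\]
which at once yields \eqref{Poi:Schauder}. The two integral estimates then follow trivially from the pointwise bound $|\nabla\phi|^2\lesssim [g]_{\alpha,1}^2$: integrating over $B_1$ yields \eqref{Poi:energieestimphi}, and integrating over $A_r$ together with $|A_r|\sim r$ yields \eqref{Poi:estimaPoisannulus}.

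No genuine obstacle is anticipated here: the sole ingredient beyond the hypothesis is the standard Schauder theory for the Neumann problem on the ball, and the ``boundary-layer'' bound \eqref{Poi:estimaPoisannulus} reduces to a volume consideration. This is in marked contrast with the harmonic case of Lemma \ref{lem:harm}, where only $L^2$ control of the boundary flux was available and the annular estimate genuinely required the combination of subharmonicity of $|\nabla\phi|^2$ with Pohozaev's identity \eqref{pohozaev}; here the H\"older regularity of $g$ directly supplies a uniform gradient bound. If one prefers to avoid quoting Schauder at the boundary, one can subtract from $\phi$ the explicit radial function $\tfrac{1}{2d}|x|^2\,\dashint_{B_1}g$ (or a smooth function with matching normal trace) to reduce to homogeneous Neumann data, extend by reflection across $\partial B_1$, and then apply interior $C^{2,\alpha}$ Schauder estimates to conclude.
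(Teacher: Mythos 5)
Your proof is correct and follows essentially the same route as the paper's: both hinge on the observation that $g(0)=0$ implies $\|g\|_{L^\infty(B_1)}\les[g]_{\alpha,1}$, then cite global Schauder estimates for the Neumann problem (the paper references \cite{Nardi}) to get \eqref{Poi:Schauder}, from which the two integral bounds follow by integrating over $B_1$ and over $A_r$ with $|A_r|\sim r$. The only caveat worth flagging is in your optional aside: a Neumann reflection across the \emph{sphere} $\partial B_1$ is not the elementary even extension available for a hyperplane (one would need something like a Kelvin-transform argument, which is not as clean); but since this is a side remark and your main argument is the global Schauder estimate, the proof stands.
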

\begin{proof}
 Estimate \eqref{Poi:Schauder} follows from global Schauder estimates \cite{Nardi} and the fact that since $g(0)=0$, $\|g\|_{L^\infty(B_1)}\les [g]_{\alpha,1}$.
\end{proof}

We will  need a trace estimate in the spirit of \cite[Lem. 3.2]{ACO}.
\begin{lemma}
For $r\le 1$, letting $A_r:=B_1\backslash B_{1-r}$, it holds for every function $\psi$,
 \begin{equation}\label{trace}
 \lt(\int_0^1\int_{\partial B_1}(\psi-\overline{\psi})^2\rt)^{1/2}\les r^{1/2} \lt(\int_{0}^1\int_{A_r} |\nabla \psi|^2\rt)^{1/2}+\frac{1}{r^{(d+1)/2}} \int_0^1\int_{A_r} |\partial_t \psi|,
\end{equation}
where $\overline{\psi}(x):=\int_0^1\psi(t,x) dt$. 
\end{lemma}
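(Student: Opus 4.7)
The plan is to proceed by duality. By definition of the $L^2$ norm,
$$\left(\int_0^1\int_{\partial B_1}(\psi-\overline\psi)^2\right)^{1/2}=\sup_g\int_0^1\int_{\partial B_1}(\psi-\overline\psi)g,$$
the supremum being over test functions $g$ with $\|g\|_{L^2([0,1]\times\partial B_1)}\le 1$. Replacing $g$ by $g-\int_0^1 g\,dt$ (which does not enlarge its norm and is invisible to the pairing, since $\psi-\overline\psi$ has vanishing time-average) I may assume $\int_0^1 g(t,\cdot)\,dt\equiv 0$, and similarly that $\int_{\partial B_1}g(t,\cdot)\,d\sigma=0$ for every $t$.

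For each $t$, extend $g(t,\cdot)$ to the harmonic function $\tilde g(t,\cdot)$ on $B_1$ solving the Neumann problem $-\Delta\tilde g=0$, $\partial_\nu\tilde g=g$, normalized by $\int_{B_1}\tilde g\,dx=0$. Let $\eta:\overline{B}_1\to[0,1]$ be a smooth cutoff with $\eta\equiv 1$ on $\partial B_1$, $\eta\equiv 0$ on $B_{1-r}$, and $|\nabla\eta|\lesssim 1/r$. Using $\Delta\tilde g=0$ and integration by parts,
$$\int_{\partial B_1}(\psi-\overline\psi)g=\int_{B_1}\nabla[\eta(\psi-\overline\psi)]\cdot\nabla\tilde g=\int_{A_r}\eta\,\nabla(\psi-\overline\psi)\cdot\nabla\tilde g+\int_{A_r}(\psi-\overline\psi)\nabla\eta\cdot\nabla\tilde g.$$
Integrating in $t$, the two terms on the right will produce respectively the two terms on the right of \eqref{trace}.

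The first contribution is handled by Cauchy-Schwarz combined with the annulus energy estimate \eqref{estimaPoisannulus} applied to $\tilde g$, giving $\int_{A_r}|\nabla\tilde g|^2\lesssim r\|g\|^2_{L^2(\partial B_1)}$, and the orthogonality $\|\nabla(\psi-\overline\psi)\|_{L^2_t}\le\|\nabla\psi\|_{L^2_t}$; the result is a bound $\lesssim r^{1/2}\|g\|_{L^2}\|\nabla\psi\|_{L^2_{t,x}(A_r)}$. For the second contribution I use the pointwise bound $|\psi(t,x)-\overline\psi(x)|\le\int_0^1|\partial_\tau\psi(\tau,x)|\,d\tau$ to factorise the double integral as
$$\Bigl|\int_0^1\int_{A_r}(\psi-\overline\psi)\nabla\eta\cdot\nabla\tilde g\Bigr|\le\|\nabla\eta\|_\infty\cdot\sup_{x\in A_r}\int_0^1|\nabla\tilde g(t,x)|\,dt\cdot\int_0^1\int_{A_r}|\partial_t\psi|.$$
The crucial ingredient is then the pointwise near-boundary gradient bound
$$\|\nabla\tilde g(t,\cdot)\|_{L^\infty(A_r)}\lesssim r^{-(d-1)/2}\|g(t,\cdot)\|_{L^2(\partial B_1)},$$
which combined with Cauchy-Schwarz in $t$ gives $\sup_x\int_0^1|\nabla\tilde g|\,dt\lesssim r^{-(d-1)/2}\|g\|_{L^2}$. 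Together with $|\nabla\eta|\lesssim 1/r$ this produces exactly the prefactor $r^{-(d+1)/2}\|g\|_{L^2}$, and taking the supremum over admissible $g$ finishes the proof.

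The main obstacle will be establishing the pointwise gradient bound on $\tilde g$, since Lemma \ref{lem:harm} only states interior Schauder estimates, whereas what is needed here is the sharp $L^2(\partial B_1)\to L^\infty(A_r)$ control of $\nabla\tilde g$ with the $r^{-(d-1)/2}$ scaling. I would derive it either by a rescaled boundary Schauder estimate on half-balls of radius $r$ centred at points of $\partial B_1$, or by direct bounds on the Neumann Poisson kernel $N(x,y)$ for $B_1$, which satisfies $|\nabla_x N(x,y)|\lesssim|x-y|^{1-d}$ so that Cauchy-Schwarz in $y\in\partial B_1$ yields the correct scaling in $r=\mathrm{dist}(x,\partial B_1)$. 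A minor book-keeping point is that the Neumann solvability condition requires subtracting the $\omega$-mean of $g$; this angular-mean piece pairs with $\psi-\overline\psi$ only through its angular mean on $\partial B_1$ and is easily handled by a direct integration-by-parts in $t$ (both factors being mean-zero in $t$), giving a contribution absorbed into the $r^{-(d+1)/2}$ term for $r\le 1$.
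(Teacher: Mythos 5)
Your duality route via a harmonic Neumann extension of the test function $g$ is genuinely different from the paper's proof, which reduces everything to a trace estimate for the single scalar function $V:=\lt(\int_0^1(\psi-\overline\psi)^2\rt)^{1/2}$ on $A_r$ and handles that by covering $\partial B_1$ with geodesic cubes of side $\sim r$, rescaling, and a one-dimensional trace inequality. Your approach can be made to work, but as written it has two gaps.

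First, the claimed pointwise bound $\|\nabla\tilde g(t,\cdot)\|_{L^\infty(A_r)}\les r^{-(d-1)/2}\|g(t,\cdot)\|_{L^2(\partial B_1)}$ is false. The Poisson-kernel computation you sketch gives $|\nabla\tilde g(x)|\les \dist(x,\partial B_1)^{-(d-1)/2}\|g\|_{L^2(\partial B_1)}$, which deteriorates as $x\to\partial B_1$; with merely $L^2$ boundary data the tangential gradient of the Neumann extension is not bounded up to the boundary (already in $d=2$, $g$ a lacunary Fourier series on $\partial B_1$ gives a counterexample). Your cutoff, as specified, only requires $\eta\equiv 1$ \emph{on} $\partial B_1$, so $\nabla\eta$ may live arbitrarily close to $\partial B_1$ and the second contribution is then not controlled. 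The repair is easy: require $\eta\equiv 1$ on the half-annulus $A_{r/2}$, so that $\nabla\eta$ is supported at distance $\ge r/2$ from $\partial B_1$, where the $r^{-(d-1)/2}$ bound does hold (most quickly, combine the annulus energy estimate \eqref{estimaPoisannulus} with the interior mean-value property on balls of radius $\sim r$).

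Second, the ``bookkeeping'' for the angular-mean piece does not close as stated. Integration by parts in $t$ of $\int_0^1\overline g_\omega(t)\int_{\partial B_1}(\psi-\overline\psi)\,dt$ produces a term involving $\int_0^1\int_{\partial B_1}|\partial_t\psi|$, a \emph{boundary trace} of $\partial_t\psi$ which is not controlled by $\int_0^1\int_{A_r}|\partial_t\psi|$ alone. A cleaner device: do not subtract the $\omega$-mean of $g$, and instead take $\tilde g$ solving $-\Delta\tilde g=-\frac{1}{|B_1|}\int_{\partial B_1}g$ (a constant in $x$), $\partial_\nu\tilde g=g$. The resulting extra bulk term $\int_0^1\int_{A_r}\eta(\psi-\overline\psi)\Delta\tilde g$ is then bounded by $\|\Delta\tilde g\|_{L^2_t}\,\int_0^1\int_{A_r}|\partial_t\psi|\les\int_0^1\int_{A_r}|\partial_t\psi|$ using exactly the pointwise bound $|\psi-\overline\psi|\le\int_0^1|\partial_t\psi|$ you already deploy, and this is absorbed into the $r^{-(d+1)/2}$ term for $r\le1$. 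With both repairs the duality argument gives a valid alternative proof; it is more machinery-heavy than the paper's elementary reduction to $V$ and cube covering, but structurally closer to the way the trace inequality is actually used downstream (in the duality of Lemma~\ref{lemLambda}).
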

\begin{proof}
By a standard density argument, we may assume $\psi\in C^1(\overline{A_r}\times[0,1])$. \\
Because of $\int_{0}^1|\nabla(\psi-\overline{\psi})|^2$
$\le\int_{0}^1|\nabla\psi|^2$, we may rewrite \eqref{trace} in terms of $v:=\psi-\overline{\psi}$ as
\[
  \lt(\int_0^1\int_{\partial B_1}v^2\rt)^{1/2}\les r^{1/2} \lt(\int_{0}^1\int_{A_r} |\nabla v|^2\rt)^{1/2}+\frac{1}{r^{(d+1)/2}} \int_0^1\int_{A_r} |\partial_t v|.
\]
Since for every $x\in \partial B_1$, $\int_0^1 v=0$, we have 
$\lt(\int_0^1 v^2\rt)^{1/2}$ $\le\int_0^1 |\partial_t v|$,
 so that it is enough to prove 
 \[
  \lt(\int_{\partial B_1} \int_0^1v^2\rt)^{1/2}\les r^{1/2} \lt(\int_{A_r}\int_{0}^1 |\nabla v|^2\rt)^{1/2}+\frac{1}{r^{(d+1)/2}} \int_{A_r} \lt(\int_0^1 v^2\rt)^{1/2}.
 \]
Introducing $V:=\lt(\int_0^1 v^2\rt)^{1/2}$ and noting that $|\nabla V|^2\le \int_0^1|\nabla v|^2$, we see that it is sufficient to establish
\begin{equation}\label{tracereduce}
 \lt(\int_{\partial B_1} V^2\rt)^{1/2}\les r^{1/2} \lt(\int_{A_r} |\nabla V|^2\rt)^{1/2} +\frac{1}{r^{(d+1)/2}} \int_{A_r} |V|.
\end{equation}
We now cover the sphere $\partial B_1$ by (geodesic) cubes $Q$ of side-length $\sim r$ in such a way that
there is only a locally finite overlap. Then the annulus $A_r$ is covered by the corresponding conical sets $Q_r$. By summation over $Q$
and the super-additivity of the square function, for (\ref{tracereduce}) it is enough to prove for every $Q$
\begin{equation}\nonumber
 \lt(\int_{Q} V^2\rt)^{1/2}\les r^{1/2} \lt(\int_{Q_r} |\nabla V|^2\rt)^{1/2} +\frac{1}{r^{(d+1)/2}} \int_{Q_r} |V|.
\end{equation}
Since $Q_r$ is the bi-Lipschitz image of the Euclidean cube $(0,r)^d$, it is enough to establish
\begin{equation}\label{tracefinal}\int_{\{0\}\times(0,r)^{d-1}} V^2\les r\int_{(0,r)^d}|\nabla V|^2+\frac{1}{r^{d+1}}\lt(\int_{(0,r)^d}|V|\rt)^2.
\end{equation}
By rescaling, for (\ref{tracefinal}) it is sufficient to consider $r=1$. 
By a one-dimensional trace inequality we have for every $x'\in (0,1)^{d-1}$
\[ |V(0,x')|\les \int_{0}^1 |\partial_1 V(x_1,x')|dx_1 +\int_0^1 |V(x_1,x')| dx_1.\]
Taking squares, integrating and using Jensen's inequality, we get
\[\int_{\{0\}\times(0,1)^{d-1}} V^2\les \int_{(0,1)^d} |\partial_1 V|^2+ \int_{(0,1)^{d}} V^2.\]
Using   Poincar\'e inequality in the form $\int_{(0,1)^d} V^2\les \int_{(0,1)^d} |\nabla V|^2+ \lt(\int_{(0,1)^d} |V|\rt)^2$, we
obtain (\ref{tracefinal}). 
\end{proof}

This trace estimate is used in a similar spirit as in \cite[Lem. 3.3]{ACO} to obtain
\begin{lemma}\label{lemLambda}
 Let $f\in L^2( \partial B_1\times(0,1))$ be such that for a.e. $x\in \partial B_1$, $\int_0^1 f(x,t)dt=0$. For $r>0$ we introduce 
 $A_r:=B_1\backslash B_{1-r}$ and define $\Lambda$ as 
 the set of pairs $(s,q)$ with $|s|\le 1/2$ and such that for $\psi\in C^1(\overline{B_1}\times[0,1])$\footnote{For  $(s, q) $ regular, \eqref{localcontieq} just means
 $\partial_t s +\Div q=0$   in $A_r$,    $s(\cdot,0)= s(\cdot,1)=0$,
  $q\cdot \nu =0$  on $\partial {B_{1-r}}\times(0,1)$  and   
 $q\cdot \nu  = f$  on  $\partial B_1\times(0,1)$},
\begin{equation}\label{localcontieq} \int_0^1\int_{A_r} s \partial_t\psi+q\cdot \nabla \psi=
\int_0^1\int_{\partial B_1} f \psi.    \end{equation}
Provided  $r\gg \lt(\int_0^1\int_{\partial B_1} f^2\rt)^{1/(d+1)}$ we then have
 \begin{equation}\label{eq:estimminLambda}
  \inf_{(s,q)\in\Lambda} \int_0^1\int_{A_r} \frac{1}{2}| q|^2\les r\int_0^1\int_{\partial B_1} f^2.
 \end{equation}
\end{lemma}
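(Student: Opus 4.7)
The approach is Fenchel convex duality, converting the constrained minimization over $\Lambda$ into a dual maximization over test functions $\psi$, whose value is then bounded using the trace inequality \eqref{trace}.

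The primal is a strictly convex quadratic over the convex set cut out by the linear continuity equation \eqref{localcontieq} and the $L^\infty$-constraint $|s|\le \tfrac{1}{2}$. Using the Legendre transforms $\sup_q[q\cdot\nabla\psi - \tfrac{1}{2}|q|^2]=\tfrac{1}{2}|\nabla\psi|^2$ and $\sup_{|s|\le 1/2} s\,\xi = \tfrac{1}{2}|\xi|$, the Fenchel-Rockafellar dual is to maximize
\[
D(\psi):= \int_0^1\!\!\int_{\partial B_1} f\psi \ -\ \tfrac{1}{2}\int_0^1\!\!\int_{A_r}|\nabla\psi|^2 \ -\ \tfrac{1}{2}\int_0^1\!\!\int_{A_r}|\partial_t\psi|
\]
over test functions $\psi$. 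Since $\int_0^1 f(x,t)dt=0$ pointwise in $x\in\partial B_1$, replacing $\psi$ by $\psi-\overline\psi$ (with $\overline\psi(x):=\int_0^1\psi(x,t)dt$) leaves $\int f\psi$ unchanged and does not increase the other two terms of $D$; hence it suffices to bound $D(\psi)$ for $\psi$ with $\overline\psi = 0$.

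For such $\psi$, Cauchy-Schwarz on $\partial B_1\times(0,1)$ followed by the trace inequality \eqref{trace} yields
\[
\int_0^1\!\!\int_{\partial B_1} f\psi \ \le\ \Bigl(\int f^2\Bigr)^{1/2}\Bigl[r^{1/2}\Bigl(\int_{A_r\times(0,1)}|\nabla\psi|^2\Bigr)^{1/2}+ r^{-(d+1)/2}\int_{A_r\times(0,1)}|\partial_t\psi|\Bigr].
\]
Young's inequality absorbs the first summand into $\tfrac{1}{2}\int|\nabla\psi|^2$ at the cost of $Cr\int f^2$. The prefactor $(\int f^2)^{1/2}r^{-(d+1)/2}$ of the second summand is $\le \tfrac{1}{2}$ exactly when $r\gg (\int f^2)^{1/(d+1)}$, which is the standing hypothesis; so that summand is absorbed into $\tfrac{1}{2}\int|\partial_t\psi|$. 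Together, $D(\psi)\le Cr\int f^2$, hence $\sup_\psi D \les r\int f^2$.

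The main technical obstacle is establishing the strong duality $\inf_\Lambda \tfrac{1}{2}\int|q|^2 = \sup_\psi D$: weak duality ($\ge$) is immediate by testing \eqref{localcontieq} against $\psi$ and applying the pointwise Fenchel inequalities above. The reverse inequality $(\le)$ requires either a Slater-type qualification (exhibiting some $(s,q)\in\Lambda$ with $\|s\|_\infty$ strictly below $\tfrac{1}{2}$, which is possible under the smallness hypothesis on $f$, e.g.\ by a Fourier-in-$t$ plus harmonic-extension-in-$x$ construction) or, equivalently, a direct construction of the primal minimizer from an extremizer $\psi^\ast$ of $D$ by setting $q:=\nabla\psi^\ast$ and taking $s$ to be a bounded selection in $\tfrac{1}{2}\,\partial|\cdot|(\partial_t\psi^\ast)$. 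Either way, everything else is a short chain consisting of the already-proved trace inequality, Cauchy-Schwarz, and Young.
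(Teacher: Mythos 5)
Your proof follows the same route as the paper's: a Lagrangian/Fenchel duality to obtain the dual functional $D(\psi)=\int_0^1\int_{\partial B_1} f\psi-\frac{1}{2}\int_0^1\int_{A_r}|\nabla\psi|^2-\frac{1}{2}\int_0^1\int_{A_r}|\partial_t\psi|$, followed by the trace inequality \eqref{trace}, Cauchy--Schwarz, and Young's inequality, with the hypothesis $r\gg(\int f^2)^{1/(d+1)}$ used to absorb the $|\partial_t\psi|$ term. The strong duality that you flag as the main obstacle is dispatched in the paper by first exhibiting an explicit element of $\Lambda$ (solving a Neumann problem, which under the smallness assumption also gives $|s|\ll 1/2$) and then invoking a standard convex--concave minimax theorem; this is essentially the Slater-type qualification you propose, so your approach is the same in substance.
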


\begin{proof}
We first note that the class $\Lambda$ is not empty: For $t\in(0,1)$, let $u_t$ be defined as the (mean-free) solution of
the Neumann problem
 \[ \begin{cases}-\Delta u_t = -\frac{1}{|A_r|} \int_{\partial B_1} f & \textrm{in } A_r\times(0,1) \\[8pt]
  \frac{ \partial u_t}{\partial \nu}=f &\textrm{on } \partial B_1\times(0,1) \\
  \frac{\partial u_t}{\partial \nu}=0 & \textrm{on } \partial B_{1-r}\times(0,1),\end{cases}\]
and set $ q(x,t):=\nabla u_t(x)$. The definition $ s(x,t)$ $:=-\int_0^t \Div  q(x,z) dz$ $=-\frac{1}{|A_r|}\int_{0}^t \int_{\partial B_1} f$
then ensures that (\ref{localcontieq}) is satisfied, and $r\gg \lt(\int_0^1\int_{\partial B_1} f^2\rt)^{\frac{1}{2}}$ yields $|s|\le1/2$.

As in \cite[Lem. 3.3]{ACO}, we now prove \eqref{eq:estimminLambda} with help of duality:
\begin{align*}
 \inf_{(s,q)\in\Lambda} \int_0^1\int_{A_r} \frac{1}{2 }| q|^2&=\inf_{(s,q), |s|\le1/2} \sup_\psi \lt\{ \int_0^1\int_{A_r} \frac{1}{2}| q|^2-\int_0^1\int_{A_r} s\partial_t \psi +q\cdot\nabla \psi\rt.\\
 &\qquad \qquad \lt.+\int_0^1\int_{\partial B_1} f\psi\rt\}\\
 &= \sup_\psi  \inf_{(s,q), |s|\le1/2}\lt\{ \int_0^1\int_{A_r} \frac{1}{2 }| q|^2-\int_0^1\int_{A_r} s\partial_t \psi +q\cdot\nabla \psi\rt.\\
 &\lt.\qquad \qquad +\int_0^1\int_{\partial B_1} f\psi\rt\},
\end{align*}
where the swapping of the $\sup$ and $\inf$ is allowed since the functional is convex in $(s,q)$ and linear in $\psi$ (see for instance \cite[Prop. 1.1]{Brezis}). 
Minimizing in $(s,q)$, and using $\int_0^1 f=0$ which allows us to smuggle in $\overline{\psi}:=\int_{0}^1\psi$, we obtain
\begin{align*}
 \inf_{(s,q)\in\Lambda} \int_0^1\int_{A_r} \frac{1}{2}| q|^2&=\sup_\psi  \lt\{ -\int_0^1\int_{A_r} \frac{1}{2}(|\nabla \psi|^2+|\partial_t \psi|) + \int_0^1\int_{\partial B_1} f\psi \rt\}\\
 &=\sup_\psi  \lt\{ -\int_0^1\int_{A_r} \frac{1}{2}(|\nabla \psi|^2+|\partial_t \psi|) + \int_0^1\int_{\partial B_1} f(\psi-\bar \psi) \rt\}\\
&\le \sup_\psi \lt\{ -\int_0^1\int_{A_r} \frac{1}{2}(|\nabla \psi|^2+|\partial_t \psi|) \rt.\\
&\qquad  \qquad \lt. + \lt(\int_0^1\int_{\partial B_1} f^2\rt)^{1/2}\lt(\int_0^1\int_{\partial B_1}(\psi-\bar \psi)^2\rt)^{1/2} \rt\} .
\end{align*}

With the abbreviation $F:= \lt(\int_0^1\int_{\partial B_1} f^2\rt)^{1/2}$ we have just established the inequality
\begin{equation*}
  \inf_{(s,q)\in\Lambda} \int_0^1\int_{A_r} \frac{1}{2}| q|^2\le \sup_{\psi}\lt\{F\lt(\int_0^1\int_{\partial B_1}(\psi-\overline{\psi})^2\rt)^{1/2} -\frac{1}{2} \int_0^1\int_{A_r}|\nabla \psi|^2 +|\partial_t \psi|\rt\}.
\end{equation*}
Using now \eqref{trace}, where we denote the constant by $C_0$, and Young's inequality, we find that 
provided $r\ge \lt(2C_0 F\rt)^{2/(d+1)}$ (in line with our assumption $r\gg \lt(\int_0^1\int_{\partial B_1} f^2\rt)^{1/(d+1)})$,
\begin{align*}
  \inf_{(s,q)\in\Lambda} \int_0^1\int_{A_r} \frac{1}{2}| q|^2&\le \sup_{\psi} \lt\{ \frac{1}{2}C_0^2 F^2 r +C_0\frac{F}{r^{(d+1)/2}}\int_0^1\int_{A_r} |\partial_t \psi| - \frac{1}{2} \int_0^1\int_{A_r} |\partial_t \psi|\rt\}\\
  &\les F^2 r= r\int_0^1\int_{\partial B_1} f^2. 
\end{align*}
This concludes the proof of \eqref{eq:estimminLambda}.
\end{proof}

\section{Proofs of the main results}
Let $\rho_0$ and $\rho_1$ be two  densities with compact support in $\R^d$  and equal mass and let $T$ be the minimizer of 
\begin{equation}\label{prob:OT}
 \min_{T\sharp \rho_0=\rho_1} \int_{\R^d} |T(x)-x|^2 \rho_0(x)dx,
\end{equation}
where by a slight abuse of notation $T\sharp \rho_0$ denotes the push-forward by $T$ of the measure $\rho_0 dx$. If $T'$ is the optimal transportation map between $\rho_1$ and $\rho_0$, then (see for instance \cite[Rem. 6.2.11]{AGS})
\begin{equation}\label{eq:inverseT}
 T'(T(x))=x, \qquad \textrm{and} \qquad T(T'(y))=y \qquad \textrm{for a.e. } (x,y)\in \supp \rho_0\times \supp \rho_1.
\end{equation}
By another abuse of notation, we will denote $T^{-1}:=T'$. \\

Now for $t\in[0,1]$ and $x\in \R^d$ we set $T_t(x):=t T(x)+(1-t) x$ and consider the non-negative and $\mathbb{R}^d$-valued measures respectively defined through
\begin{equation}\label{eq:defjrho}
 \rho(\cdot,t):=T_t\sharp \rho_0 \qquad \textrm{and} \qquad j(\cdot,t):= T_t\sharp \lt[(T-Id)\rho_0\rt].
\end{equation}
It is easy to check that $j(\cdot,t)$ is absolutely continuous with respect to $\rho(\cdot,t)$.
The couple $(\rho,j)$ solves the Eulerian (or Benamou-Brenier) formulation of optimal transportation (see \cite[Th. 8.1]{Viltop}  or \cite[Chap. 8]{AGS}, see also \cite[Prop. 5.32]{Santam} for the uniqueness), i.e. it is  the minimizer of 
\begin{equation}\label{BenamBre}
\min \lt\{ \int_0^1\int_{\R^d} \frac{1}{\rho}|j|^2 \ : \ \partial_t \rho+\Div j=0, \quad \rho(\cdot,0)=\rho_0, \ \rho(\cdot,1)=\rho_1\rt\}, 
\end{equation}
where the continuity equation including its boundary conditions are imposed in a distributional sense
and where the functional is defined through (see \cite[Th. 2.34]{AFP}),
\[
 \int_0^1\int_{\R^d} \frac{1}{\rho}|j|^2:=\begin{cases}
                                          \displaystyle \int_0^1\int_{\R^d} \lt|\frac{dj}{d\rho}\rt|^2 d\rho & \textrm{if } j\ll \rho,\\[10pt]
                                           +\infty & \textrm{otherwise}.
                                          \end{cases}
\]
Since $T$ is the gradient of a convex function, by Alexandrov Theorem \cite[Th. 14.25]{VilOandN}, $T$ is differentiable a.e., that is  for a.e. $x_0$, there exists a symmetric matrix $A$ such that 
\[
  T(x)=T(x_0)+ A(x-x_0)+o(|x-x_0|).
\]
Moreover, $A$ coincide a.e. with the absolutely continuous part of the distributional derivative $D T$ of the map $T$. We will from now on denote $\nabla T(x_0):=A$.
For $t\in[0,1]$, by  \cite[Prop. 5.9]{Viltop}, $\rho(\cdot,t)$ (and thus also $j$) is absolutely continuous with respect to the Lebesgue measure. The functional can be therefore rewritten as
\[
 \int_0^1\int_{\R^d} \frac{1}{\rho}|j|^2=\int_0^1\int_{\R^d} \frac{1}{\rho}|j|^2(x,t) dx dt,
\]
where
\[
 \frac{1}{\rho}|j|^2 (x,t):=\begin{cases}
                           \frac{1}{\rho(x,t)}|j(x,t)|^2 & \textrm{if } \rho(x,t)\neq 0\\ 
                           0 & \textrm{otherwise}.
                          \end{cases}
\]
Moreover,   the Jacobian equation
\begin{equation}\label{eq:Jacob}
 \rho(t, T_t(x))\det \nabla T_t(x)=\rho_0(x),
\end{equation}
holds a.e. (see \cite[Ex. 11.2]{VilOandN} or \cite[Th. 4.8]{Viltop}) and in particular, $\rho_1(T(x))\det \nabla T(x)=\rho_0(x)$.

The proof of Theorem \ref{theoepsintro} is based on the decay properties of the excess energy
\begin{equation}\label{def:excess}
 \E(\rho_0,\rho_1,T,R):=R^{-2}\dashint_{B_R} |T-x|^2 \rho_0.
\end{equation}
As will be shown in the proofs of Proposition \ref{P3} and Theorem \ref{theo:final}, up to a change of variables it is not restrictive to assume that $\rho_0(0)=\rho_1(0)=1$.\\
Before proceeding with the proof of Theorem \ref{theoepsintro}, let us gather few useful lemmas.
\begin{lemma}\label{lem:Linfty}
 Let $T$ be a minimizer of \eqref{prob:OT} and assume that  $\frac{1}{2}\le \rho_0\le 2$ in $B_1$ and that  $\E(\rho_0,\rho_1,T,1)\ll1$. Then,
 \begin{equation}\label{supestim}\sup_{B_{3/4}} |T-x|+ |T^{-1}-x|\les \lt(\int_{B_1} |T-x|^2\rho_0 \rt)^{1/(d+2)}.\end{equation}
As a consequence,
 \begin{equation}\label{Ttinclu}
  T_t(B_{1/8})\subset B_{3/16}.
 \end{equation}
 Moreover, for $t\in[0,1]$, we have for the pre-image
 \begin{equation}\label{infestimTt}
  T_t^{-1}(B_{1/2})\subset B_{3/4}.
 \end{equation}
\end{lemma}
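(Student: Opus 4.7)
My strategy is to establish \eqref{supestim} first (this is the heart of the matter), and then to derive \eqref{Ttinclu} and \eqref{infestimTt} as essentially elementary consequences. The bound \eqref{supestim} on $|T-x|$ in $B_{3/4}$ is obtained by a monotonicity-plus-McCann argument. Writing $M:=|T(x_0)-x_0|$ and $e:=(T(x_0)-x_0)/M$ for $x_0\in B_{3/4}$, I would argue that the monotonicity of $T=\nabla\psi$, combined with the McCann displacement convexity bound (the $L^\infty$ control on $\rho_t$ referred to in the introduction, which via the Jacobian identity \eqref{eq:Jacob} translates into an a priori $L^\infty$ bound on $T$ over compacts of $B_1$), allows one to propagate the pointwise large displacement at $x_0$ to a full ball: a.e.\ $x\in B_{cM}(x_0+(M/2)e)$ satisfies $|T(x)-x|\ge M/4$. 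Since $M\ll 1$ by $\E\ll 1$, this ball is contained in $B_1$; integrating against $\rho_0\ge 1/2$ yields
\[
\int_{B_1}|T-x|^2\rho_0 \;\gtrsim\; M^{d+2},
\]
which is the desired bound on $M$. For $|T^{-1}(y_0)-y_0|$ at $y_0\in B_{3/4}$, I would apply the same argument on a slightly larger ball $B_r$ with $r<1$: it shows in the first place that $x_0:=T^{-1}(y_0)$ lies in $B_{7/8}$, and then $|T^{-1}(y_0)-y_0|=|x_0-T(x_0)|$ is controlled by the same upper bound.

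For \eqref{Ttinclu}, I would just note that for $x\in B_{1/8}\subset B_{3/4}$,
\[
|T_t(x)|\le |x|+t|T(x)-x|\le \tfrac{1}{8}+\sup_{B_{3/4}}|T-x|\le \tfrac{3}{16},
\]
by \eqref{supestim} and the assumption $\E\ll1$. For \eqref{infestimTt}, I would argue by contradiction. Assume $x\in T_t^{-1}(B_{1/2})$ with $|x|>3/4$, and pick $x':=(3/4-\delta)\,x/|x|$ for a small $\delta>0$, so that $x'\in B_{3/4}$ and \eqref{supestim} gives $|T(x')-x'|\le\eps'\ll 1$. The map $T_t$ is the gradient of the convex function $\psi_t:=(1-t)|\cdot|^2/2+t\psi$, hence monotone:
\[
(T_t(x)-T_t(x'))\cdot(x-x')\ge 0.
\]
Since $x-x'$ is a positive multiple of $x/|x|$, this rearranges to $T_t(x)\cdot x/|x|\ge T_t(x')\cdot x/|x|$. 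The left-hand side is at most $|T_t(x)|\le 1/2$, while the right-hand side is at least $x'\cdot x/|x|-t|T(x')-x'|\ge (3/4-\delta)-\eps'$. Combining yields $1/2\ge 3/4-\delta-\eps'$, a contradiction for $\delta,\eps'$ small enough. Hence $x\in B_{3/4}$.

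The \emph{main obstacle} is the propagation step inside \eqref{supestim}: extending a pointwise lower bound $|T(x_0)-x_0|\ge M$ to a similar bound on a full ball of radius $\sim M$. Pure $2$-monotonicity of $T$ only produces the conclusion along the axial segment $[x_0,x_0+(M/2)e]$; spreading it transversely requires the McCann $L^\infty$ control on $\rho_t$ (equivalently, an a priori $L^\infty$ bound on the Brenier map $T$) to bound how much $T$ can fan out perpendicular to $e$. Once \eqref{supestim} is available, the inclusions \eqref{Ttinclu} and \eqref{infestimTt} reduce to the elementary arguments above.
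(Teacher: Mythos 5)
Your overall strategy is aligned with the paper's, and the inclusions \eqref{Ttinclu}, \eqref{infestimTt} are handled correctly (your argument for \eqref{infestimTt} via monotonicity of $T_t=\nabla\psi_t$ is in fact a bit cleaner than the paper's expansion of $|T_t(x)-T_t(0)|^2$). The problem is in your treatment of \eqref{supestim}, specifically in what you call ``the main obstacle''.

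You claim that pure $2$-monotonicity of $T$ only controls the displacement along the axial segment $[x_0,x_0+(M/2)e]$, and that propagating the lower bound $|T(x)-x|\gtrsim M$ transversely requires McCann's displacement convexity. This is wrong on two counts. First, monotonicity does propagate transversely: write $v:=x-x_0=(M/2)e+z$ with $z\perp e$, $|z|\le cM$. Monotonicity gives $(T(x)-T(x_0))\cdot v\ge 0$, and since $T(x_0)=x_0+Me$,
\[
(T(x)-x)\cdot v \;\ge\; (T(x_0)-x)\cdot v \;=\; Me\cdot v-|v|^2 \;=\; \tfrac{M^2}{2}-\lt(\tfrac{M^2}{4}+|z|^2\rt) \;\ge\; M^2\lt(\tfrac14-c^2\rt),
\]
so $|T(x)-x|\ge M^2(1/4-c^2)/|v|\gtrsim M$ for $c<1/2$. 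The monotonicity inequality carries information in every direction, not just along $e$. Second, and more seriously, invoking McCann here would be circular: in this paper the $L^\infty$ bound \eqref{eq:displconv} on $\rho_t$ is proved in the lemma \emph{following} this one, and its proof relies explicitly on \eqref{supestim} and \eqref{infestimTt} of the present lemma (to ensure $T_t^{-1}(B_{1/2})\subset B_1$ and $T(T_t^{-1}(B_{1/2}))\subset B_1$). Your parenthetical that a density bound on $\rho_t$ ``translates into an a priori $L^\infty$ bound on $T$'' is also a confusion: the Jacobian identity \eqref{eq:Jacob} controls derivatives of $T_t$, not the displacement $|T-x|$, which is exactly what this lemma is trying to bound.

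For the record, the paper's proof of \eqref{supestim} implements your idea more directly: starting from $u(0)\cdot x\le u(x)\cdot x+|x|^2\les |u(x)|^2+|x|^2$ (monotonicity plus Young), it integrates over $B_r(re_1)$ to get $u_1(0)\les r^{-(d+1)}\int_{B_1}|u|^2+r$ and optimizes in $r$; no intermediate pointwise lower bound on a ball is needed. Also, your step ``it shows in the first place that $x_0:=T^{-1}(y_0)$ lies in $B_{7/8}$'' for the $T^{-1}$ estimate does not follow merely from the $T$-estimate on a larger ball; it needs a separate monotonicity/contradiction argument along a segment, of the same type as the one you use for \eqref{infestimTt} (the paper does exactly this with an intermediate point $z\in\partial B_{\frac12(\frac34+\frac45)}\cap[x,y]$).
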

\begin{proof}
We begin with the proof of \eqref{supestim}. Since we assume that $\frac{1}{2}\le \rho_0\le 2$, it is enough to prove that 
\[
 \sup_{B_{3/4}} |T-x|+ |T^{-1}-x|\les \lt(\int_{B_1} |T-x|^2 \rt)^{1/(d+2)}.
\]
We  first prove the estimate on $T$. Let $u(x):=T(x)-x$.  By monotonicity of $T$, for a.e. $x, y\in B_1$,
 \begin{equation}\label{monotony}(u(x)-u(y))\cdot(x-y)\ge -|x-y|^2.\end{equation}
Let $y\in B_{3/4}$ be such that \eqref{monotony} holds for a.e. $x\in B_1$. 
By translation we may assume that $y=0$. By rotation, it is enough to prove for the first coordinate of $u$ that 
 \[u_1(0)\les \lt(\int_{B_{1/4}} |u|^2\rt)^{1/(d+2)}.\]
Taking $y=0$ in \eqref{monotony}, we find for a.e. $x\in B_{1/4}$
\[u(0)\cdot x\le u(x)\cdot x+|x|^2\les |u(x)|^2+|x|^2.\] 
  Integrating the previous inequality over the ball $B_r(r e_1)$, we obtain
 \[ u(0)\cdot r e_1\les \dashint_{B_r(r e_1)} |u|^2 + r^2,\]
 so that 
 \[u_1(0)\les  \frac{1}{r^{d+1}} \int_{B_1} |u|^2  +r.\]
 Optimizing in $r$ yields \eqref{supestim}.
 We now prove the estimate on $T^{-1}$. By the above argument for $T$ in the ball $B_{4/5}$ instead of $B_{3/4}$,
 it is enough to show that $T^{-1}(B_{3/4})\subset B_{4/5}$. Assume that there exists $y\in B_{3/4}$ and $x\in \R^d$ with 
 $T(x)=y$ but $|x|\ge 4/5$. Let then $z\in \partial B_{\frac{1}{2}(\frac{3}{4}+\frac{4}{5})}\cap [x,y]$. By monotonicity of $T$,
 \begin{align*}
  0&\le (T(x)-T(z))\cdot(x-z)\\
  &=(y-z)\cdot(x-z)+(z-T(z))\cdot(x-z)\\
  &\le -\frac{1}{40} |x-z|+ |x-z| |T(z)-z|\\
  &\le |x-z|\lt(-\frac{1}{40} +\sup_{B_{\frac{1}{2}(\frac{3}{4}+\frac{4}{5})}} |T-x|\rt),
 \end{align*}
 which is absurd if $\E(\rho_0,\rho_1,T,1)\ll1$ by the $L^\infty$ bound on $T$ on the ball $B_{\frac{1}{2}(\frac{3}{4}+\frac{4}{5})}$.
\\
 
 Since  \eqref{Ttinclu} is a  direct consequence of \eqref{supestim}, we are left with the proof of \eqref{infestimTt}.
If $x\in \R^d$ is such that $T_t(x)\in B_{1/2}$, then by \eqref{supestim} in the form of $|T_t(0)|=o(1)$, where $o(1)$ denotes a function that goes
to zero as $\E(\rho_0,\rho_1,T,1)$ goes to zero,
\begin{align*}
 \frac{1}{4}(1+ o(1))&\ge |T_t(x)-T_t(0)|^2\\
 &=t^2|T(x)-T(0)|^2+ 2t(1-t) (T(x)-T(0))\cdot x +(1-t)^2|x|^2\\
 &\stackrel{\eqref{monotony}}{\ge}  t^2|T(x)-T(0)|^2 +(1-t)^2|x|^2\\
   & \  \ge \frac{1}{2}\min\lt\{|T(x)-T(0)|^2,|x|^2\rt\}.
\end{align*}
 From this we see that  $x$ or $T(x)$ is in $ B_{\frac{1}{\sqrt{2}}+o(1)}\subset B_{3/4}$. In the first case, \eqref{infestimTt} is proven while in the second, we have thanks to \eqref{supestim} that 
$x\in T^{-1}(T(x))\subset T^{-1}( B_{\frac{1}{\sqrt{2}}+o(1)})\subset B_{3/4}$ from which we  get \eqref{infestimTt} as well.
\end{proof}
Our second lemma is a localized version of McCann's displacement convexity (see \cite[Cor. 4.4]{McCann}).
\begin{lemma}
 Assume that $\rho_0(0)=\rho_1(0)=1$ and that $\E(\rho_0,\rho_1,T,1)+[\rho_0]_{\alpha, 1}+[\rho_1]_{\alpha,1}\ll 1$. Then for $t\in[0,1]$, it holds
 \begin{equation}\label{eq:displconv}
  \sup_{B_{1/2}} \rho(t,\cdot)\le 1+[\rho_0]_{\alpha, 1}+[\rho_1]_{\alpha,1}.
 \end{equation}
\end{lemma}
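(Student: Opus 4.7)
The estimate is a localized incarnation of McCann's displacement convexity, and the plan is to combine it with the Jacobian equation and two applications of the weighted AM--GM inequality. First, I would pick a point $y\in B_{1/2}$ and use \eqref{infestimTt} from Lemma~\ref{lem:Linfty} to pull it back to a point $x=T_t^{-1}(y)\in B_{3/4}$; then by \eqref{supestim} the image $T(x)$ still lies in $B_1$, which is the crucial fact ensuring that I may control $\rho_1$ via its H\"older seminorm on $B_1$.

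Second, McCann's theorem gives the pointwise concavity $\det\nabla T_t(x)^{1/d}\ge (1-t)+t\,\det\nabla T(x)^{1/d}$ a.e. on $\{ \nabla T \text{ exists}\}$. Combined with the Jacobian equation \eqref{eq:Jacob} at time $t$ and at $t=1$, this reads
\[
\rho(t,y)\;=\;\frac{\rho_0(x)}{\det\nabla T_t(x)}\;\le\;\frac{\rho_0(x)}{\bigl((1-t)+t(\rho_0(x)/\rho_1(T(x)))^{1/d}\bigr)^d}.
\]
Applying the weighted AM--GM inequality $(1-t)\cdot 1+t\lambda\ge \lambda^t$ with $\lambda=(\rho_0(x)/\rho_1(T(x)))^{1/d}$ and raising to the $d$-th power, the denominator is bounded below by $(\rho_0(x)/\rho_1(T(x)))^t$, so that
\[
\rho(t,y)\;\le\;\rho_0(x)^{1-t}\rho_1(T(x))^t.
\]

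Third, a second application of Young's inequality gives $\rho_0(x)^{1-t}\rho_1(T(x))^t\le (1-t)\rho_0(x)+t\,\rho_1(T(x))$. Using the normalization $\rho_0(0)=\rho_1(0)=1$ together with the H\"older bounds on $B_1$ and $|x|,|T(x)|\le 1$, I get $\rho_0(x)\le 1+[\rho_0]_{\alpha,1}$ and $\rho_1(T(x))\le 1+[\rho_1]_{\alpha,1}$, whence
\[
\rho(t,y)\;\le\;1+(1-t)[\rho_0]_{\alpha,1}+t\,[\rho_1]_{\alpha,1}\;\le\;1+[\rho_0]_{\alpha,1}+[\rho_1]_{\alpha,1},
\]
which is \eqref{eq:displconv}.

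The only subtle step is the justification of McCann's pointwise concavity of $\det\nabla T_t^{1/d}$ at a point $x$ of Alexandrov differentiability of $T$: it requires that $\nabla T(x)$ be nonnegative (which holds because $T=\nabla\psi$ with $\psi$ convex) and that the Jacobian equation be valid at $x$ and at $T_t(x)$, which is exactly the content of \eqref{eq:Jacob}. Once these a.e.\ prerequisites are in place the inequality is purely algebraic, and the control $T_t^{-1}(B_{1/2})\subset B_{3/4}$ from Lemma~\ref{lem:Linfty} is what makes the whole localization work. The smallness assumption $\E+[\rho_0]_{\alpha,1}+[\rho_1]_{\alpha,1}\ll 1$ is only used to invoke Lemma~\ref{lem:Linfty}.
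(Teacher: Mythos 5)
Your proposal is correct and follows essentially the same route as the paper: localize via Lemma~\ref{lem:Linfty} so that $T_t^{-1}(B_{1/2})\subset B_{3/4}$ and $T(T_t^{-1}(B_{1/2}))\subset B_1$, use the Jacobian equation together with McCann's concavity to get the interpolation bound $\rho(t,y)\le\rho_0(x)^{1-t}\rho_1(T(x))^t$, then finish with Young's inequality and the H\"older bounds. The only cosmetic difference is that you spell out the concavity-of-$\det^{1/d}$ step followed by weighted AM--GM where the paper compresses this into the single statement $\det\nabla T_t\ge(\det\nabla T)^t$, and you apply Young's inequality before (rather than after) plugging in the H\"older bounds.
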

\begin{proof}
 We start by pointing out that since $\rho_0(0)=\rho_1(0)=1$ and $[\rho_0]_{\alpha, 1}+[\rho_1]_{\alpha,1}\ll1$ we have for $i=0,1$,
 \begin{equation}\label{suprhoi}
  \sup_{B_1}|1-\rho_i|\le [\rho_i]_{\alpha,1}\ll1.
 \end{equation}
 For every $t\in (0,1)$, the map 
 $T_t$ has a well-defined inverse $\rho(t,\cdot)-$a.e. (see the proof of \cite[Th. 8.1]{Viltop}) so that for $x\in B_{1/2}$, \eqref{eq:Jacob} can be written as
 \[
  \rho(t,x)=\frac{\rho_0(T_t^{-1}(x))}{\det \nabla T_t (T_t^{-1}(x))}.
 \]
By concavity of $\det(\cdot)^{1/d}$ on non-negative symmetric matrices, we have 
\[
 \det \nabla T_t (T_t^{-1}(x))\ge \lt(\det \nabla T(T_t^{-1}(x))\rt)^{t}.
\]
By \eqref{eq:Jacob}, $\det \nabla T(T_t^{-1}(x))=\frac{\rho_0(T_t^{-1}(x))}{\rho_1(T(T_t^{-1}(x)))}$, so that 
\[
\rho(t,x)\le \lt(\rho_0(T_t^{-1}(x)\rt)^{1-t}\lt(\rho_1(T(T_t^{-1}(x)))\rt)^{t}.
\]
Since $\E(\rho_0,\rho_1,T,1)\ll 1$ and \eqref{suprhoi} holds, by \eqref{infestimTt} and \eqref{supestim}, we have $T_t^{-1}(B_{1/2})\subset B_1$ and $T(T_t^{-1}(B_{1/2}))\subset B_1$. By \eqref{suprhoi}, we then have
\[
 \rho(t,x)\le\lt( 1+[\rho_0]_{\alpha,1}\rt)^{1-t} \lt(1+[\rho_1]_{\alpha, 1}\rt)^t, 
\]
which by Young's inequality concludes the proof of \eqref{eq:displconv}.
\end{proof}

We now can turn to the proof of Theorem \ref{theoepsintro}. We first prove that the deviation of the velocity field $v:=\frac{d j}{d \rho}$ from being the gradient of a harmonic function is locally controlled by the Eulerian energy. 
   The construction we use is somewhat reminiscent of the  Dacorogna-Moser construction (see \cite{Santam}).
\begin{proposition}\label{Prop:BBharm}
 Let $(\rho,j)$ be the minimizer of \eqref{BenamBre}. Assume that $\rho_0(0)=\rho_1(0)=1$ and that 
 \begin{equation}\label{hyp:small}
  \E(\rho_0,\rho_1,T,1)+[\rho_0]_{\alpha, 1}+ [\rho_1]_{\alpha,1}\ll 1.
 \end{equation}
Then, there exists $\phi$ harmonic in $B_{1/2}$
 and such that 
 \begin{equation}\label{eq:enerharm}
 \int_{B_{1/2}} |\nabla \phi|^2\les \int_{0}^1\int_{B_1} \frac{1}{\rho}|j|^2 +[\rho_0]^2_{\alpha, 1}+ [\rho_1]^2_{\alpha,1},
\end{equation}
and 
 \begin{equation}\label{eq:estimdistharmBB}
  \int_0^1\int_{B_{1/2}} \frac{1}{\rho}|j-\rho\nabla \phi|^2\les \lt(\int_0^1\int_{B_1} \frac{1}{\rho}|j|^2\rt)^{\frac{d+2}{d+1}}+[\rho_0]^2_{\alpha, 1}+ [\rho_1]^2_{\alpha,1}.
 \end{equation}

\end{proposition}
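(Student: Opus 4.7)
The plan is to compare the Benamou--Brenier minimizer $(\rho,j)$ against a carefully constructed Eulerian competitor, exploiting a quasi-orthogonal decomposition together with the boundary-layer estimate from Lemma \ref{lemLambda}. First, I would fix a radius $R\in(1/2,3/4)$ via a Fubini argument so that the boundary trace of the flux, $f_0(x):=\int_0^1 j(x,t)\cdot\nu(x)\,dt$ on $\partial B_R$, satisfies $\int_{\partial B_R}f_0^2\les E$, where $E:=\int_0^1\int_{B_1}|j|^2/\rho$. Integrating the continuity equation gives $\int_{\partial B_R}f_0=-\int_{B_R}(\rho_1-\rho_0)$, so the mean $m:=\dashint_{\partial B_R}f_0$ is of order $[\rho_0]_\alpha+[\rho_1]_\alpha$. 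Defining $\phi$ as the harmonic function on $B_R$ with mean-zero Neumann data $f:=f_0-m$, Lemma \ref{lem:harm} yields \eqref{eq:enerharm}, the adjustment by $m$ being absorbed into $[\rho_0]_\alpha^2+[\rho_1]_\alpha^2$.

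For \eqref{eq:estimdistharmBB} I would apply the quasi-orthogonal identity $|j-\rho\nabla\Phi|^2/\rho = |j|^2/\rho - 2 j\cdot\nabla\Phi +\rho|\nabla\Phi|^2$ not directly to $\phi$ but to an auxiliary \emph{Poisson} potential $P$ solving $-\Delta P=\rho_1-\rho_0+c_R$ in $B_R$ with $\partial_\nu P=f_0$ on $\partial B_R$, where $c_R$ is the small solvability constant of order $[\rho_0]_\alpha+[\rho_1]_\alpha$. Standard elliptic estimates yield $\|\nabla(P-\phi)\|_{L^2(B_R)}\les[\rho_0]_\alpha+[\rho_1]_\alpha$, so proving \eqref{eq:estimdistharmBB} with $P$ in place of $\phi$ suffices, by the triangle inequality. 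The competitor $(\tilde\rho,\tilde j)$ agrees with $(\rho,j)$ outside $B_R\times[0,1]$; on $B_{R-r}\times[0,1]$ it is the Dacorogna--Moser-like flow $\tilde\rho(x,t):=(1-t)\rho_0(x)+t\rho_1(x)$, $\tilde j:=\nabla P$ (plus a tiny time-dependent correction to absorb the $c_R$ mismatch in continuity); on the annulus $A_r:=B_R\setminus B_{R-r}$ it is built from the pair $(s,q)$ of Lemma \ref{lemLambda} with outer-boundary data $j(x,t)\cdot\nu(x)-\nabla P(x)\cdot\nu(x)$, which has vanishing time-average by construction of $P$. Lemma \ref{lemLambda} then gives $\int_0^1\int_{A_r}|q|^2\les r E$, and its hypothesis $r\gg E^{1/(d+1)}$ forces the optimal choice $r\sim E^{1/(d+1)}$, producing the exponent $(d+2)/(d+1)$.

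Inserting this competitor into the minimality bound $\int_{B_R}|j|^2/\rho\le \int_{B_R}|\tilde j|^2/\tilde\rho$ and applying the quasi-orthogonal identity for $P$ reduces matters to controlling cross terms. Integration by parts of $\int\!\!\int j\cdot\nabla P$ using $\partial_t\rho+\Div j=0$ together with the $t$-independence of $P$ yields $\int\!\!\int j\cdot\nabla P=\int P(\rho_1-\rho_0)+\int_{\partial B_R}Pf_0$, while $P$ being Poisson with Neumann data $f_0$ yields $\int|\nabla P|^2=\int P(\rho_1-\rho_0+c_R)+\int_{\partial B_R}Pf_0$; these cancel exactly up to $c_R\int P$, which is $O([\rho_0]_\alpha^2+[\rho_1]_\alpha^2)$. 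The remaining interior contribution $\int_{B_{R-r}}|\tilde j|^2/\tilde\rho$ equals $\int_{B_{R-r}}|\nabla P|^2/\tilde\rho$, and McCann's displacement convexity \eqref{eq:displconv} keeps $\tilde\rho$ within $1+O([\rho_0]_\alpha+[\rho_1]_\alpha)$, so the density error $\int(\bar\rho-1)|\nabla P|^2\les ([\rho_0]_\alpha+[\rho_1]_\alpha) E$ is absorbed into $E^{(d+2)/(d+1)}+[\rho_0]_\alpha^2+[\rho_1]_\alpha^2$ by Young's inequality (using $E\le 1$, so $E^2\le E^{(d+2)/(d+1)}$). Altogether $\int_{B_R}|j-\rho\nabla P|^2/\rho\les E^{(d+2)/(d+1)}+[\rho_0]_\alpha^2+[\rho_1]_\alpha^2$, whence \eqref{eq:estimdistharmBB}.

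The main technical obstacle is engineering the boundary-layer patch in the annulus so that the competitor genuinely satisfies the full continuity equation with the correct terminal data, while making the outer-boundary flux distributionally match $(\rho,j)$; this is precisely the role of Lemma \ref{lemLambda}, whose hypothesis dictates the scaling $r\sim E^{1/(d+1)}$. The crucial idea is to run the quasi-orthogonal expansion with the Poisson $P$ rather than the harmonic $\phi$: this is what produces the exact cancellation of bulk and boundary cross terms, with the cost of passing back to $\phi$ confined to the Hölder-scale correction $\|\nabla(P-\phi)\|_{L^2}\les[\rho_0]_\alpha+[\rho_1]_\alpha$, which is absorbed at the very end.
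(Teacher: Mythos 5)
Your proposal follows essentially the same strategy as the paper: choose a good radius by Fubini, introduce a Poisson potential as the reference vector field, establish quasi-orthogonality via the localized continuity equation, and build a competitor by patching the gradient of the Poisson potential inside with a boundary-layer field from Lemma \ref{lemLambda}, the hypothesis of which forces the scaling $r\sim E^{1/(d+1)}$ and hence the super-linear exponent $\frac{d+2}{d+1}$.

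Three small imprecisions that would need correcting in a write-up. First, your Fubini choice should control the \emph{time-resolved} trace $\int_0^1\int_{\partial B_R}|j|^2\les E$, not merely the time-averaged flux $\int_{\partial B_R}f_0^2\les E$: Lemma \ref{lemLambda} is applied to the time-dependent boundary datum $j\cdot\nu-\bar f$ and its conclusion is proportional to $\int_0^1\int_{\partial B_R}(f-\bar f)^2$, so the weaker bound you state is insufficient as written (the paper in fact picks $R$ so that $\int_0^1\int_{\partial B_R}|j|^2\les\int_0^1\int_{B_1}|j|^2$). Second, McCann's displacement-convexity bound \eqref{eq:displconv} is needed for the \emph{geodesic} density $\rho$, which enters through the $\rho|\nabla P|^2$ term in the quasi-orthogonal expansion; the closeness of the linear interpolant $\tilde\rho=(1-t)\rho_0+t\rho_1$ to $1$ is automatic from the Hölder hypotheses and has nothing to do with displacement convexity. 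Third, testing the continuity equation with $\zeta\equiv1$ already gives exact Neumann compatibility on $B_R$, so your solvability constant $c_R$ is identically zero and the ``tiny time-dependent correction'' to the competitor is superfluous.
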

\begin{remark}
 The crucial point in \eqref{eq:estimdistharmBB} is that the right-hand side is strictly super-linear in $\int_0^1\int_{B_1} \frac{1}{\rho}|j|^2$, and at least quadratic in $[\rho_0]_{\alpha,1}+[\rho_1]_{\alpha,1}$. 
\end{remark}
\begin{proof}
Without loss of generality, we may assume that $\int_0^1\int_{B_1} \frac{1}{\rho}|j|^2\ll 1$ since otherwise we can take $\phi=0$. Notice that since $\rho_0(0)=\rho_1(0)=1$, thanks to \eqref{hyp:small}, 
if we let 
\[\gamma:=[\rho_0]_{\alpha, 1}+ [\rho_1]_{\alpha,1} \qquad \textrm{and } \qquad \delrho:=\rho_1-\rho_0,\]
we have by \eqref{eq:displconv}
 \begin{equation}\label{displconvgamma}
  \rho\le 1+\gamma
 \end{equation}
 and since $\rho_0(0)=\rho_1(0)=1$, 
\begin{equation}\label{eq:estimdelrho}
\sup_{B_1} |\delrho| \les [\rho_0]_{\alpha,1}+[\rho_1]_{\alpha,1}\le \gamma.\end{equation}
\\

{\it Step 1} [Choice of a good radius]
 Using  \eqref{displconvgamma}, and Fubini, we can find a radius $R\in (1/2,1)$ such that
 \begin{equation}\label{eq:hypf}
  \int_{\partial B_R} \int_0^1|j|^2\les \int_{B_1}\int_0^1 |j|^2\les \int_0^1\int_{B_1} \frac{1}{\rho}|j|^2
 \end{equation}
with the understanding that $R$ is a Lebesgue point of $r\mapsto j\in L^2(\partial B_r)$ with respect to  the weak topology. Notice in particular that \eqref{eq:hypf} implies that $j\in L^2(B_R)$.
We claim that for every function $\zeta\in H^1(B_R\times (0,1))$
\footnote{we consider here are larger class of test functions than $C^{1}(\overline{B_R}\times[0,1])$ 
since we want to apply \eqref{eq:localconteq} to the  function $\tilde \phi$ defined in \eqref{deftildephi}.},
\begin{equation}\label{eq:localconteq}
 \int_0^1\int_{B_R} \rho \partial_t\zeta+j\cdot \nabla \zeta=\int_0^1\int_{\partial B_R} \zeta f +\int_{B_R} \zeta(\cdot,1)\rho_1-\zeta(\cdot,0)\rho_0,
\end{equation}
where $f:= j\cdot \nu$ denotes the normal component of $j$.
To this purpose, for $0<\eps\ll 1$ we introduce the cut-off function 
\[\eta_\eps(x):=\begin{cases}
                 1 &\textrm{if } |x|\le R-\eps\\
                 \frac{R-|x|}{\eps} &\textrm{if } R-\eps\le |x|\le R\\
                 0 &\textrm{otherwise}
                \end{cases}\]
and obtain by admissibility of $(\rho,j)$
\begin{align*}
\int_{\R^2}\eta_\eps(\zeta(\cdot,1)\rho_1-\zeta(\cdot,0)\rho_0)
&=\int_0^1\int_{\R^2} \partial_t (\zeta \eta_\eps) \rho + \nabla (\zeta \eta_\eps)\cdot j\nonumber\\
&=\int_{0}^1\int_{\R^2}\eta_\eps \partial_t \zeta \rho+ \eta_\eps \nabla \zeta\cdot j
-\frac{1}{\eps}\int_{0}^1\int_{B_{R}\backslash B_{R-\eps}} \zeta j\cdot \nu.
\end{align*}
Letting $\eps$ go to zero and using the above Lebesgue-point property of $R$, we obtain \eqref{eq:localconteq}.\\               
\medskip

{\it Step 2} [Definition of $\phi$]
We will argue that it is enough to establish 
\begin{equation}\label{eq:topproveharmBB}
  \int_0^1\int_{B_R} \frac{1}{\rho}|j-\rho\nabla \tilde \phi|^2\les\lt(\int_0^1\int_{B_1} \frac{1}{\rho}|j|^2\rt)^{\frac{d+2}{d+1}}+\gamma^2,
\end{equation}
where $\tilde \phi$ is defined via
\begin{equation}\label{deftildephi}
 \begin{cases}
   -\Delta \tilde \phi=\delrho & \textrm{ in } B_R\\
   \frac{\partial \tilde \phi}{\partial \nu}=\Bf  &\textrm{ on } \partial B_R,
  \end{cases}
\end{equation}
with $\Bf:=\int_0^1 f dt$ and is estimated as 
\begin{equation}\label{eq:enertildephi}
  \int_{B_R} |\nabla \tilde  \phi|^2\les \int_0^1\int_{B_1} \frac{1}{\rho}|j|^2+\gamma^2.
\end{equation}
Moreover, defining for  $1\gg r>0$,  $A_r:= B_{R}\backslash B_{R(1-r)}$, we will show that  
\begin{equation}\label{eq:enerannulus}
 \int_{A_r} |\nabla \tilde \phi|^2\les  r\lt(\int_{\partial B_R} |\Bf|^2 +\gamma^2\rt).
\end{equation}

By \eqref{eq:localconteq} applied to $\zeta=1$, we get 
\begin{equation*}
 \int_{B_R} \delrho=-\int_{\partial B_R} \bar f,
\end{equation*}
so that \eqref{deftildephi} is indeed solvable. We decompose $\tilde \phi=\phi+\hat \phi$ with  $\phi$ and $\hat \phi$ solutions of 
\begin{equation}\label{defphi}\begin{cases}
   -\Delta \phi=0 &\textrm{in } B_R\\
   \frac{\partial \phi}{\partial \nu}=\Bf+\frac{1}{\H^{d-1}(\partial B_R)}\int_{ B_R} \delrho &\textrm{on } \partial B_R,
  \end{cases}\qquad \qquad 
  \begin{cases}
   -\Delta \hat \phi=\delrho &\textrm{in } B_R\\
   \frac{\partial \hat \phi}{\partial \nu}=-\frac{1}{\H^{d-1}(\partial B_R)}\int_{ B_R} \delrho &\textrm{on } \partial B_R,
  \end{cases}
  \end{equation}
Applying \eqref{energieestimphi} from Lemma \ref{lem:harm} (with the radius $1$ replaced by $R\sim 1$)
we have,
\[
 \int_{B_{1/2}} |\nabla \phi|^2\le \int_{B_R} |\nabla \phi|^2\les \int_{\partial B_R} |\Bf|^2+ \sup_{B_R}|\delrho|^2\stackrel{\eqref{eq:hypf}\&\eqref{eq:estimdelrho}}{\les}  \int_0^1\int_{B_1} \frac{1}{\rho}|j|^2+  \gamma^2,
\]
and thus  \eqref{eq:enerharm} holds.   Since by \eqref{Poi:energieestimphi} from Lemma \ref{lem:Poi} (with the radius $1$ replaced by $R\sim 1$),
\begin{equation}\label{eq:enerhatphi}
 \int_{B_R} |\nabla \hat \phi|^2\les \gamma^2,
\end{equation}
estimate \eqref{eq:enertildephi} is obtained by
\[
 \int_{B_R} |\nabla \tilde  \phi|^2\les \int_{B_R} |\nabla \phi|^2+\int_{B_R} |\nabla \hat \phi|^2\les \int_0^1\int_{B_1} \frac{1}{\rho}|j|^2+\gamma^2.
\]
Similarly, \eqref{eq:enerannulus} follows from  \eqref{estimaPoisannulus} and \eqref{Poi:estimaPoisannulus}. \\
Assume now that  \eqref{eq:topproveharmBB} is established. We then get \eqref{eq:estimdistharmBB}:
\begin{align*}
 \int_0^1\int_{B_R} \frac{1}{\rho}|j-\rho\nabla  \phi|^2&=\int_0^1\int_{B_R} \frac{1}{\rho}|j-\rho\nabla \tilde \phi+\rho \nabla \hat \phi|^2\\
 &\les \int_0^1\int_{B_R} \frac{1}{\rho}|j-\rho\nabla \tilde \phi|^2+ \int_0^1\int_{B_R}\rho|\nabla \hat \phi|^2\\
 &\stackrel{\eqref{eq:topproveharmBB} \& \eqref{displconvgamma} \&  \eqref{eq:enerhatphi}}{\les} \lt(\int_0^1\int_{B_1} \frac{1}{\rho}|j|^2\rt)^{\frac{d+2}{d+1}}+\gamma^2.
\end{align*}
\medskip

{\it Step 3} [Quasi-orthogonality]
We start the proof of \eqref{eq:topproveharmBB}. In order to keep notation light, we will assume from now on that $R=1/2$. Here we prove that 
\begin{equation}\label{secondreduc}
 \int_{0}^1\int_{B_{1/2}} \frac{1}{\rho}|j-\rho\nabla \tilde \phi|^2\le \int_{0}^1\int_{B_{1/2}} \frac{1}{\rho} |j|^2- (1-\gamma)\int_{B_{1/2}} |\nabla \tilde \phi|^2. 
\end{equation}
Notice that if $\rho=0$ then $j=0$ and thus also $j-\rho \nabla \tilde \phi=0$, so that the left-hand side of \eqref{secondreduc} is well defined (see the discussion  below \eqref{BenamBre}).
Based on this we compute
\begin{align*}
 \frac{1}{2}\int_{0}^1\int_{B_{1/2}} \frac{1}{\rho}|j-\rho\nabla \tilde \phi|^2&=\frac{1}{2} \int_{0}^1\int_{B_{1/2}} \frac{1}{\rho} |j|^2-\int_{0}^1\int_{B_{1/2}} j\cdot\nabla \tilde \phi+\frac{1}{2} \int_{0}^1\int_{B_1} \rho |\nabla \tilde \phi|^2\\
 &=\frac{1}{2} \int_{0}^1\int_{B_{1/2}} \frac{1}{\rho} |j|^2 -\int_{0}^1\int_{B_{1/2}} \lt(1- \frac{\rho}{2}\rt)|\nabla \tilde \phi|^2 -\int_{0}^1\int_{B_{1/2}} (j-\nabla \tilde\phi)\cdot\nabla \tilde \phi\\
 &\stackrel{\eqref{displconvgamma}}{\le} \frac{1}{2} \int_{0}^1\int_{B_{1/2}} \frac{1}{\rho} |j|^2 -\frac{1-\gamma}{2}\int_{0}^1\int_{B_{1/2}}|\nabla \tilde \phi|^2 -\int_{0}^1\int_{B_{1/2}} (j-\nabla \tilde \phi)\cdot\nabla \tilde \phi.
\end{align*}
 Using \eqref{eq:localconteq} with $\zeta=\tilde \phi$ and testing \eqref{deftildephi} with  $\tilde \phi$, we have 
\[
 \int_{0}^1\int_{B_{1/2}} (j-\nabla \tilde \phi)\cdot\nabla \tilde \phi=\int_{\partial B_{1/2}} \tilde \phi \lt(\int_0^1 f-\Bf\rt)=0, 
\]
where we recall that $\Bf=\int_0^1 f$. This proves \eqref{secondreduc}.\\
\medskip

{\it Step 4} [The main estimate]
In this last step, we establish that 
\begin{equation}\label{mainstep}
 \int_{0}^1\int_{B_{1/2}} \frac{1}{\rho} |j|^2- \int_{B_{1/2}} |\nabla \tilde \phi|^2\les   \lt(\int_0^1\int_{B_1} \frac{1}{\rho}|j|^2\rt)^{\frac{d+2}{d+1}}+\gamma^2.
\end{equation}
Thanks to \eqref{secondreduc} and \eqref{eq:enertildephi}, this would yield \eqref{eq:topproveharmBB}. 
 By minimality of $(\rho,j)$, it is enough to construct a competitor $(\tilde{\rho},\tilde{j})$ that agrees with $(\rho,j)$ outside of $B_{1/2}\times(0,1)$ and that satisfies the upper bound given through \eqref{mainstep}. 
 We now make the following ansatz 
\[(\tilde\rho,\tilde j):=\begin{cases}
                          (t\rho_1 +(1-t)\rho_0,\nabla \tilde \phi) &\textrm{ in }B_{1/2(1-r)}\times(0,1),\\
                          (t\rho_1 +(1-t)\rho_0+s,\nabla \tilde \phi+q) &\textrm{ in }A_r\times(0,1),
                         \end{cases}
                         \]
with $(s,q)\in \Lambda$, where $\Lambda$ is the set defined in Lemma \ref{lemLambda} with $f$ replaced by $f-\Bf$ and the radius $1$ replaced by $1/2$. Notice that if $|s|\le 1/2$, by \eqref{hyp:small} and $\rho_0(0)=\rho_1(0)=1$, 
\begin{equation}\label{boundstilderho}
 \frac{1}{4}\le \tilde \rho.
\end{equation}
Thanks to \eqref{deftildephi} for $\tilde \phi$, \eqref{localcontieq} for $(s,q)$ and  \eqref{eq:localconteq} for $(\rho,j)$, $(\tilde{\rho},\tilde{j})$ extended by $(\rho,j)$ outside of $B_{1/2}\times(0,1)$
is indeed admissible for \eqref{BenamBre}.\\
By Lemma \ref{lemLambda}, if $r\gg \lt(\int_0^1\int_{\partial B_{1/2}} (f-\Bf)^2\rt)^{1/(d+1)}$, we may choose $(s,q)\in \Lambda$ such that 
 \begin{equation}\label{eq:estimminLambdaproof}
 \int_0^1\int_{A_r}| q|^2\les r\int_0^1\int_{\partial B_{1/2}} (f-\Bf)^2.
 \end{equation}
 By definition of $(\tilde \rho, \tilde j)$,
 \begin{multline}\label{eq:estimgap}
 \int_0^1\int_{B_{1/2}} \frac{1}{\tilde{\rho}}|\tilde{j}|^2-\int_{B_{1/2}}|\nabla \tilde \phi|^2\le \int_0^1\int_{B_{1/2(1-r)}} \frac{1}{t\rho_1+(1-t)\rho_0}|\nabla \tilde \phi|^2 -\int_{ B_{1/2(1-r)}} |\nabla \tilde \phi|^2\\
 +\int_0^1\int_{A_r} \frac{1}{\tilde \rho}|\nabla \tilde \phi +q|^2.
 \end{multline}
The first two terms on the right-hand side can be estimated as
\begin{align}
 \int_0^1\int_{B_{1/2(1-r)}} \frac{1}{t\rho_1+(1-t)\rho_0}|\nabla \tilde \phi|^2 - |\nabla \tilde \phi|^2&=\int_0^1\int_{B_{1/2(1-r)}} \frac{t(1-\rho_0)+(1-t)(1-\rho_1)}{t\rho_1+(1-t)\rho_0}|\nabla \tilde \phi|^2\nonumber\\
 &\stackrel{\eqref{hyp:small}}{\les}\gamma\int_{B_{1/2}(1-r)} |\nabla \tilde \phi|^2\nonumber\\
 &\stackrel{\eqref{eq:enertildephi}}{\les} \gamma\lt(\int_0^1\int_{B_1} \frac{1}{\rho}|j|^2 +\gamma^2\rt) \label{eq:estimfirsttermgap}.
\end{align}
We  now estimate the last term of \eqref{eq:estimgap}:
 \begin{align*}\int_0^1\int_{A_r} \frac{1}{\tilde \rho}|\nabla \tilde \phi +q|^2
 &\stackrel{\eqref{boundstilderho}}{\les}  \int_{0}^1\int_{A_r}|\nabla \tilde \phi|^2+ |q|^2\\
 &\stackrel{\eqref{eq:estimminLambdaproof}}{\les}  \int_{A_r}|\nabla  \tilde  \phi|^2+  r\int_{\partial B_{1/2}}(f-\Bf)^2\\
 &\stackrel{\eqref{eq:enerannulus}}{\les}r\lt(\int_{\partial B_{1/2}} \Bf^2 +\gamma^2\rt).\end{align*}
 Taking $r$ to be a large but order-one multiple of 
\[
\lt(\int_0^1\int_{\partial B_{1/2}} (f-\Bf)^2\rt)^{1/(d+1)}\le\lt(\int_0^1\int_{\partial B_{1/2}} f^2\rt)^{1/(d+1)}
\stackrel{\eqref{eq:hypf}}{\les}\lt( \int_0^1\int_{B_1} \frac{1}{\rho}|j|^2\rt)^{1/(d+1)}
\]
 yields
\[
 \int_0^1\int_{A_r} \frac{1}{\tilde \rho}|\nabla \tilde \phi +q|^2\les  \lt(\int_0^1\int_{B_1} \frac{1}{\rho}|j|^2\rt)^{1/(d+1)}\lt(\int_0^1\int_{B_1} \frac{1}{\rho}|j|^2+\gamma^2\rt).
\]
Plugging this and  \eqref{eq:estimfirsttermgap} into \eqref{eq:estimgap},
\begin{multline*}
  \int_0^1\int_{B_{1/2}} \frac{1}{\tilde{\rho}}|\tilde{j}|^2-\int_{B_{1/2}}|\nabla \tilde \phi|^2\les \lt( \lt(\int_0^1\int_{B_1} \frac{1}{\rho}|j|^2\rt)^{1/(d+1)}+\gamma\rt)\lt( \int_0^1\int_{B_1} \frac{1}{\rho}|j|^2+\gamma^2\rt)\\
  \les \lt(\int_0^1\int_{B_1} \frac{1}{\rho}|j|^2\rt)^{\frac{d+2}{d+1}}+\gamma^2,
\end{multline*}
where we have used Young's inequality and the fact that $2>\frac{d+2}{d+1}$. This proves \eqref{mainstep}.
 \end{proof}

\begin{remark}
 The quasi-orthogonality property \eqref{secondreduc} is a generalization of the following classical fact: If $\phi$
 is a harmonic function with $\frac{\partial \phi}{\partial \nu}=f$ on $\partial B_1$, then for every divergence-free
 vector-field $b$ with $b\cdot \nu =f$ on $\partial B_1$
 \[
  \int_{B_1}|b-\nabla \phi|^2=\int_{B_1} |b|^2-\int_{B_1}|\nabla \phi|^2,
 \]
so that the minimizers $b$ of the left-hand side coincide with the minimizers of the right-hand side. See for instance \cite[Lem. 2.2]{Merl} for an application of this idea in a different context. 
\end{remark}

We now prove that \eqref{eq:estimdistharmBB} implies a similar statement in the Lagrangian setting, namely that the distance of the 
displacement $T-x$ to the set of gradients of harmonic functions is (locally) 
controlled by the energy. This is reminiscent of the harmonic approximation property for minimal surfaces (see \cite[Sec. III.5]{Maggi}). 

\begin{proposition}\label{prop:Lagestim}
 Let $T$ be the minimizer of \eqref{prob:OT} and assume that $\rho_0(0)=\rho_1(0)=1$. Then there exists  a  function $\phi$ harmonic in $B_{1/8}$,  such that 
 \begin{equation}\label{eq:estimdistharmLag}
  \int_{B_{1/8}} |T-(x+\nabla \phi)|^2 \rho_0\les \E(\rho_0,\rho_1,T,1)^{\frac{d+2}{d+1}}+[\rho_0]_{\alpha,1}^2+[\rho_1]_{\alpha,1}^2
 \end{equation}
 and 
 \begin{equation}\label{eq:energieestimphiLag}
  \int_{B_{1/8}} |\nabla \phi|^2\les \E(\rho_0,\rho_1,T,1)+[\rho_0]_{\alpha,1}^2+[\rho_1]_{\alpha,1}^2.
 \end{equation}

\end{proposition}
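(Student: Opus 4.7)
The plan is to apply Proposition~\ref{Prop:BBharm} and translate its Eulerian conclusion back into the Lagrangian setting via the change of variables $y=T_t(x)$. The harmonic function $\phi$ in Proposition~\ref{prop:Lagestim} will simply be the one produced by Proposition~\ref{Prop:BBharm}, viewed as a harmonic function on $B_{1/8}\subset B_{1/2}$.

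First I would bound the Eulerian energy $\int_0^1\int_{B_1}\frac{1}{\rho}|j|^2$ by the Lagrangian excess $\E(\rho_0,\rho_1,T,1)$. Writing $v=j/\rho$, using $v(T_t(x),t)=T(x)-x$ together with the Jacobian identity \eqref{eq:Jacob}, the change of variables $y=T_t(x)$ yields
\[
\int_0^1\int_{B_1}\frac{1}{\rho}|j|^2\,dy\,dt=\int_0^1\int_{T_t^{-1}(B_1)}|T(x)-x|^2\,\rho_0(x)\,dx\,dt,
\]
and the $L^\infty$ bounds of Lemma~\ref{lem:Linfty} (applied at a slightly larger scale) show that $T_t^{-1}(B_1)$ lies in a ball of comparable size, so this quantity is $\les\E(\rho_0,\rho_1,T,1)$.

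Next, Proposition~\ref{Prop:BBharm} supplies a function $\phi$ harmonic in $B_{1/2}$ satisfying \eqref{eq:enerharm} and \eqref{eq:estimdistharmBB}. Applying the same change of variables to \eqref{eq:estimdistharmBB} gives
\[
\int_0^1\int_{T_t^{-1}(B_{1/2})}|T(x)-x-\nabla\phi(T_t(x))|^2\,\rho_0(x)\,dx\,dt\les \E^{\frac{d+2}{d+1}}+[\rho_0]_{\alpha,1}^2+[\rho_1]_{\alpha,1}^2.
\]
By \eqref{Ttinclu} we have $T_t(B_{1/8})\subset B_{3/16}\subset B_{1/2}$, hence $B_{1/8}\subset T_t^{-1}(B_{1/2})$ for every $t\in[0,1]$, so restricting the integration to $x\in B_{1/8}$ loses nothing.

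The last step is to freeze the argument of $\nabla\phi$ at $x$ rather than at $T_t(x)$. Since the segment $[x,T_t(x)]$ stays in $B_{1/4}$ for $x\in B_{1/8}$, Taylor's inequality combined with the Schauder bound \eqref{Schauder} gives
\[
|\nabla\phi(T_t(x))-\nabla\phi(x)|^2\le \|\nabla^2\phi\|^2_{L^\infty(B_{1/4})}\,|T(x)-x|^2\les\lt(\int_{B_{1/2}}|\nabla\phi|^2\rt)|T(x)-x|^2.
\]
Integrating against $\rho_0$ over $B_{1/8}$, using \eqref{eq:enerharm} and $\int_{B_{1/8}}|T-x|^2\rho_0\les\E$, the substitution error is controlled by $\E(\E+[\rho_0]_{\alpha,1}^2+[\rho_1]_{\alpha,1}^2)$, which is absorbed into $\E^{(d+2)/(d+1)}+[\rho_0]_{\alpha,1}^2+[\rho_1]_{\alpha,1}^2$ under the (implicit) smallness assumption, since $\E^2\le\E^{(d+2)/(d+1)}$ and $\E\cdot[\rho_i]_{\alpha,1}^2\le[\rho_i]_{\alpha,1}^2$ when $\E\ll1$. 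The energy bound \eqref{eq:energieestimphiLag} is then immediate from \eqref{eq:enerharm}. I expect the main technical point to be the initial bound on the Eulerian energy, which requires invoking the $L^\infty$ estimates of Lemma~\ref{lem:Linfty} on $T_t^{-1}(B_1)$ without losing the constant in the final estimate.
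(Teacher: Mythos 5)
Your proposal follows the paper's proof essentially line for line: bound the Eulerian energy by the Lagrangian excess via the change of variables $y=T_t(x)$ and $v(T_t(x),t)=T(x)-x$, invoke Proposition~\ref{Prop:BBharm}, change variables back, restrict to $B_{1/8}$ using \eqref{Ttinclu}, and replace $\nabla\phi\circ T_t$ by $\nabla\phi$ via the Schauder bound \eqref{Schauder}, absorbing the resulting error $\E\bigl(\E+[\rho_0]_{\alpha,1}^2+[\rho_1]_{\alpha,1}^2\bigr)$ under the smallness assumption since $\E^2\le\E^{(d+2)/(d+1)}$.

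The one point where your write-up, taken literally, would not close — and which you correctly identify as the delicate step — is the attempt to bound $\int_0^1\int_{B_1}\frac{1}{\rho}|j|^2$ by $\E$. Lemma~\ref{lem:Linfty} only provides $T_t^{-1}(B_{1/2})\subset B_{3/4}$, and there is no a priori control of $T$ (and hence of $T_t^{-1}(B_1)$) outside $B_1$, so "applying the $L^\infty$ estimate at a slightly larger scale" is not available with the given hypotheses. The paper avoids this by bounding only $\int_0^1\int_{B_{1/2}}\frac{1}{\rho}|j|^2\les\E$, via \eqref{infestimTt}, and then applying Proposition~\ref{Prop:BBharm} rescaled to the ball $B_{1/2}$; this yields $\phi$ harmonic on $B_{1/4}\supset B_{1/8}$, which is why the proposition is stated on $B_{1/8}$ rather than $B_{1/2}$. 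With this adjustment your argument is exactly the paper's.
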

\begin{proof} To lighten notation, let $\E:=\E(\rho_0,\rho_1,T,1)$.
 Notice first that we may assume that $\E+[\rho_0]_{\alpha,1}^2+[\rho_1]_{\alpha,1}^2\ll 1$ since otherwise we can take $\phi=0$. \\ 
 
We  recall the definitions of the measures
\[
 \rho(\cdot,t):=T_t\sharp \rho_0 \qquad \textrm{and} \qquad j(\cdot,t):= T_t\sharp \lt[(T-Id)\rho_0\rt].
\]
We note that the velocity field $v=\frac{dj}{d\rho}$ satisfies  $v(T_t(x),t)=T(x)-x$  for a.e. $x\in \supp \rho_0$ (this can be seen arguing  
for instance as in the proof of  \cite[Th. 8.1]{Viltop}). Hence, by definition of the expression $\frac{1}{\rho}|j|^2$ and that of $\rho$, 
 \[ \int_0^1\int_{B_{1/2}}\frac{1}{\rho}|j|^2 = \int_0^1\int_{B_{1/2}} |v|^2 d\rho 
 =\int_0^1\int_{T_t^{-1}(B_{1/2})} |T-x|^2\rho_0\stackrel{\eqref{infestimTt}}{\les} \int_{B_1} |T-x|^2\rho_0= \E.
 \]
By Proposition \ref{Prop:BBharm}, we infer that there exists  a function $\phi$ harmonic in
 $B_{1/4}$ such that
 \begin{equation}\label{eq:estimLagBB}
  \int_0^1\int_{B_{1/4}}\frac{1}{\rho}|j-\rho \nabla \phi|^2\les \E^{\frac{d+2}{d+1}}+[\rho_0]_{\alpha,1}^2+[\rho_1]_{\alpha,1}^2\qquad \textrm{and} \qquad \int_{B_{1/4}}|\nabla \phi|^2 \les \E+[\rho_0]_{\alpha,1}^2+[\rho_1]_{\alpha,1}^2.
 \end{equation}

We now prove \eqref{eq:estimdistharmLag}.
 By the triangle inequality we have 
\[
  \int_{B_{1/8}} |T-(x+\nabla \phi)|^2\rho_0 \les  \int_{0}^1\int_{B_{1/8}} |T-(x+\nabla \phi \circ T_t)|^2\rho_0+\int_0^1\int_{B_{1/8}} |\nabla \phi-\nabla \phi \circ T_t|^2\rho_0.
\]
Using that for $t\in [0,1]$, $|T_t(x)-x|\le |T(x)-x|$, the second term on the right-hand side is estimated as above:
\begin{align*}
 \int_0^1\int_{B_{1/8}} |\nabla \phi-\nabla \phi \circ T_t|^2\rho_0&\stackrel{\eqref{Ttinclu}}{\les} \sup_{B_{3/16}}|\nabla^2 \phi|^2 \int_0^1\int_{B_{1/8}} |T_t-x|^2\rho_0\\
 &\stackrel{\eqref{Schauder}}{\les} \E \int_{B_{1/4}} |\nabla \phi|^2 \\
 &\stackrel{\eqref{eq:estimLagBB}}{\les} \E \lt(\E +[\rho_0]_{\alpha,1}^2+[\rho_1]_{\alpha,1}^2\rt).
\end{align*}
We thus just need to estimate the first term. Recall that $v=\frac{dj}{d\rho}$ satisfies $v(T_t(x),t)=T(x)-x$, so that we obtain for the integrand
$T(x)-(x+\nabla \phi(T_t(x))$ $=(v(t,\cdot)-\nabla\phi)(T_t(x))$ for a.e. $x\in \supp \rho_0$. Hence, by definition of $\rho$ and
by our convention on how to interpret $\frac{1}{\rho} |j-\rho \nabla \phi|^2$ when $\rho$ vanishes,
\begin{align*}
  \int_{0}^1\int_{B_{1/8}} |T-(x+\nabla \phi \circ T_t)|^2\rho_0&=\int_0^1\int_{T_t(B_{1/8})}| v-\nabla \phi|^2 d\rho\\
  &= \int_0^1\int_{T_t(B_{1/8})} \frac{1}{\rho} |j-\rho \nabla \phi|^2\\
  &\stackrel{\eqref{Ttinclu}}{\le} \int_0^1\int_{B_{1/4}} \frac{1}{\rho} |j-\rho \nabla \phi|^2\stackrel{\eqref{eq:estimLagBB}}{\les} \E^{\frac{d+2}{d+1}}+[\rho_0]_{\alpha,1}^2+[\rho_1]_{\alpha,1}^2.
\end{align*}

\end{proof}

Analogously to De Giorgi's proof of regularity for minimal surfaces (see for instance \cite[Chap. 25.2]{Maggi}), 
we are going to prove an ``excess improvement by tilting''-estimate. 
By this we mean that if at a certain scale $R$, the map $T$ is close to the identity, i.e. if $\E(\rho_0,\rho_1,T,R)+R^{2\alpha}([\rho_0]_{\alpha,1}^2+[\rho_1]_{\alpha,1}^2)\ll1$, 
then on a scale $\theta R$, after an affine change of coordinates, it is even closer to be the identity.  
Together with \eqref{eq:estimdistharmLag} from Proposition \ref{prop:Lagestim}, the main ingredient
of the proof are the regularity estimates \eqref{Schauder} from Lemma \ref{lem:harm} for harmonic functions.  

\begin{proposition}\label{iter}
 For every $\alpha'\in (0,1)$ there exist $\theta=\theta(d,\alpha, \alpha')>0$ and $C_\theta(d,\alpha,\alpha')>0$ with the property that for every $R>0$ such that $\rho_0(0)=\rho_1(0)=1$  and
 \begin{equation}\label{hyp:Erho}\E(\rho_0,\rho_1,T,R)+R^{2\alpha}([\rho_0]_{\alpha,R}^2+[\rho_1]_{\alpha,R}^2)\ll 1,\end{equation}
 there exist a symmetric matrix $B$ with $\det B=1$ and a vector $b$ with 
\begin{equation}\label{boundBc}|B-Id|^2 +\frac{1}{R^2} |b|^2\les\E(\rho_0,\rho_1,T,R)+R^{2\alpha}([\rho_0]_{\alpha,R}^2+[\rho_1]_{\alpha,R}^2),\end{equation}
such that, letting $\lambda:=\rho_1(b)^{\frac{1}{d}}$, $\hat x:=B^{-1} x$,  $\hat y:= \lambda B(y-b)$ and then 
\begin{equation}\label{defhat} 
\hat{T}(\hat x):=\lambda B(T(x)-b), \quad \hat{\rho}_0(\hat x):= \rho_0(x) \quad  \textrm{ and } \quad \hat \rho_1(\hat y):=\lambda^{-d} \rho_1(y),
\end{equation}
it holds $\hat \rho_0(0)=\hat \rho_1(0)=1$,
\begin{equation}\label{boundlambda}
|\lambda-1|^2\les  \E(\rho_0,\rho_1,T,R)+R^{2\alpha}[\rho_1]^2_{\alpha,R},
\end{equation}
and 
\begin{equation}\label{improvementE}
 \E(\hat \rho_0,\hat \rho_1,\hat{T},\theta R)\le \theta^{2\alpha'} \E(\rho_0,\rho_1,T,R)+C_\theta R^{2\alpha}\lt([\rho_0]_{\alpha,R}^2+[\rho_1]_{\alpha,R}^2\rt).
\end{equation}
\end{proposition}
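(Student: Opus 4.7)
The plan is to use the harmonic approximation of Proposition \ref{prop:Lagestim} and absorb its first-order Taylor expansion at the origin into the affine change of coordinates $(b,B,\lambda)$; the improvement on the smaller scale will come from the fact that harmonic functions agree with their first-order Taylor expansion up to a quadratic remainder. Because the statement is scale-invariant under $x\mapsto x/R$, I reduce to $R=1$ and abbreviate $\E:=\E(\rho_0,\rho_1,T,1)$, $\gamma^2:=[\rho_0]_{\alpha,1}^2+[\rho_1]_{\alpha,1}^2$, so that \eqref{hyp:Erho} reads $\E+\gamma^2\ll 1$. Proposition \ref{prop:Lagestim} then supplies a harmonic $\phi$ on $B_{1/8}$ with
\[
\int_{B_{1/8}}|T-(x+\nabla\phi)|^2\rho_0\les \E^{\frac{d+2}{d+1}}+\gamma^2 \qquad\mbox{and}\qquad \int_{B_{1/8}}|\nabla\phi|^2\les \E+\gamma^2,
\]
and rescaled Schauder \eqref{Schauder} yields $\sup_{B_{1/16}}(|\nabla\phi|^2+|\nabla^2\phi|^2+|\nabla^3\phi|^2)\les \E+\gamma^2$.

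I next define the change of coordinates by $b:=\nabla\phi(0)$, $A:=\mathrm{Id}+\nabla^2\phi(0)$, $\lambda:=\rho_1(b)^{1/d}$ and $B:=(\det A)^{1/(2d)}A^{-1/2}$, where $A^{-1/2}$ is the positive symmetric square root (well-defined since $|A-\mathrm{Id}|^2\les\E+\gamma^2\ll 1$). By construction $B$ is symmetric with $\det B=1$ and satisfies the algebraic identity $\lambda BAB=\lambda(\det A)^{1/d}\mathrm{Id}$. The normalizations $\hat\rho_0(0)=\hat\rho_1(0)=1$ are immediate from \eqref{defhat} and the choice of $\lambda$, and the bounds \eqref{boundBc} and \eqref{boundlambda} reduce to the pointwise Schauder control of $\nabla\phi(0)$ and $\nabla^2\phi(0)$, combined with $|\rho_1(b)-1|\les\gamma|b|^\alpha$.

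The core computation is an explicit decomposition of $\hat T(\hat x)-\hat x$. Taylor-expanding $\nabla\phi(x)=b+(A-\mathrm{Id})x+R(x)$ with $|R(x)|\les \sup|\nabla^3\phi|\cdot|x|^2$, writing $e:=T-(x+\nabla\phi)$ for the Lagrangian error, and substituting into $\hat T(\hat x)=\lambda B(T(B\hat x)-b)$ together with the identity $\lambda BAB=\lambda(\det A)^{1/d}\mathrm{Id}$ gives
\[
\hat T(\hat x)-\hat x=\bigl(\lambda(\det A)^{1/d}-1\bigr)\hat x+\lambda B\,R(B\hat x)+\lambda B\,e(B\hat x).
\]
The scalar deviation is the most delicate contribution; here the structural input is that $\phi$ is harmonic, so $\mathrm{tr}(\nabla^2\phi(0))=\Delta\phi(0)=0$ and hence $\det A=1+O(|\nabla^2\phi(0)|^2)$. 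Combined with $\rho_1(b)=1+O(\gamma|b|^\alpha)$, this yields $|\lambda(\det A)^{1/d}-1|^2\les(\E+\gamma^2)^2+\gamma^2(\E+\gamma^2)^\alpha$.

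For $\theta\ll 1$ chosen so that $B(B_\theta)\subset B_{2\theta}\subset B_{1/8}$, I square the decomposition, integrate over $B_\theta$ (using $\det B=1$ to return to original coordinates and $\hat\rho_0\les 1$), and divide by $\theta^{d+2}$ to obtain
\[
\E(\hat\rho_0,\hat\rho_1,\hat T,\theta)\les|\lambda(\det A)^{1/d}-1|^2+\theta^2(\E+\gamma^2)+\theta^{-(d+2)}\bigl(\E^{\frac{d+2}{d+1}}+\gamma^2\bigr).
\]
The main obstacle is balancing these three terms against $\theta^{2\alpha'}\E+C_\theta\gamma^2$: the H\"older pieces collect into $C_\theta\gamma^2$; the term $\theta^2(\E+\gamma^2)$ is controlled by $\theta^{2\alpha'}\E+\theta^2\gamma^2$ using $\alpha'<1$; and the decisive term $\theta^{-(d+2)}\E^{(d+2)/(d+1)}=\theta^{-(d+2)}\E^{1/(d+1)}\cdot\E$ is absorbable into $\theta^{2\alpha'}\E$ precisely because of its super-linear $\E$-dependence, which only requires $\E$ small relative to a fixed power of $\theta$ --- licit since $\theta=\theta(d,\alpha,\alpha')$ is chosen first and the smallness implicit in \eqref{hyp:Erho} may be taken $\theta$-dependent.
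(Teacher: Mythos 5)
Your proposal is correct, and it follows the same overall strategy as the paper (rescale to $R=1$; apply Proposition \ref{prop:Lagestim}; set $b=\nabla\phi(0)$; normalize with a symmetric unit-determinant matrix built from $\nabla^2\phi(0)$; control everything with the rescaled Schauder estimate \eqref{Schauder}; balance the four resulting terms against $\theta^{2\alpha'}\E + C_\theta\gamma^2$ by exploiting $\alpha'<1$ and the super-linear exponent $\tfrac{d+2}{d+1}>1$). Where it departs is in the choice of normalizing matrix and the resulting decomposition. The paper takes $B=e^{-A/2}$ with $A=\nabla^2\phi(0)$, so $\det B=1$ by the trace-free property of $\nabla^2\phi(0)$, and then splits $\lambda B(T-b)-B^{-1}x$ into four error pieces: the $(\lambda-1)$ deviation, the harmonic-approximation error $T-(x+\nabla\phi)$, the matrix mismatch $(B^{-2}-Id-A)x$ (quadratic in $\nabla^2\phi(0)$), and the Taylor remainder $\nabla\phi-b-Ax$. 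You instead take $A=Id+\nabla^2\phi(0)$ and $B=(\det A)^{1/(2d)}A^{-1/2}$, which again has $\det B=1$, and use the exact algebraic identity $BAB=(\det A)^{1/d}\,Id$ to collapse the matrix mismatch and the $\lambda$ deviation into a \emph{single scalar} factor $\lambda(\det A)^{1/d}-1$; the trace-free property of $\nabla^2\phi(0)$ then enters through $\det A=1+O(|\nabla^2\phi(0)|^2)$ rather than through $\text{tr}(A)=0$ directly. The net effect is the same $(\E+\gamma^2)^2+\gamma^2(\E+\gamma^2)^\alpha$ bound that the paper obtains for its two matrix/scalar pieces combined, but your decomposition is exact (no approximate matrix identity to estimate), which is a bit cleaner. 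Both choices of $B$ satisfy the required bound \eqref{boundBc}, and the final balancing step is identical in substance.
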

\begin{proof}
By a rescaling $\tilde{x}= R^{-1}x$, which amounts to the re-definition $\tilde{T}(\tilde x):=R^{-1} T(R\tilde x)$ (which preserves optimality)
and $\tilde b:= R^{-1} b$, we may assume that $R=1$. \\
As in the previous proof, we will let $\E:=\E(\rho_0,\rho_1,T,1)$.
Let $\phi$ be the harmonic function given by Proposition \ref{prop:Lagestim} and then define $b:=\nabla \phi(0)$, $A:= \nabla^2\phi(0)$ and set $B:=e^{-A/2}$, so that $\det B=1$. 
 Using \eqref{Schauder} from Lemma \ref{lem:harm} and \eqref{eq:energieestimphiLag}
from Proposition \ref{prop:Lagestim}, 
we see that \eqref{boundBc} is satisfied. By definition of $\lambda$ and since $\rho_1(0)=1$, and $[\rho_0]_{\alpha,1}+[\rho_1]_{\alpha,1}\les 1$,
\[
 |\lambda-1|^2= |\rho_1(b)-1|^2\le |b|^{2\alpha} [\rho_1]^2_{1,\alpha}\stackrel{\eqref{boundBc}}{\les} \lt(\E^{\alpha}+1\rt)[\rho_1]^2_{\alpha,1}.
\]
Using Young's inequality with $p=\alpha^{-1}$ and $q=(1-\alpha)^{-1}$  we obtain for $\delta>0$,
\begin{equation}\label{estimlambdaeps}
 |\lambda-1|^2\le \delta \E+\frac{C_\alpha}{\delta} [\rho_1]^2_{\alpha,1}, 
\end{equation}
where $C_\alpha$ is a constant which depends only on $\alpha$. In particular, taking $\delta=1$ we obtain \eqref{boundlambda}.

Defining $\hat \rho_i$ and $\hat T$ as in \eqref{defhat}  we have by  (\ref{boundBc}) and \eqref{hyp:Erho}
\begin{align*}
\dashint_{B_{\theta}} |\hat{T}-\hat{x}|^2 \hat \rho_0&= \dashint_{B B_{\theta}} |\lambda B (T-b)-B^{-1} x|^2 \rho_0\\
&\les\lambda^2 \dashint_{B_{2\theta}} | T-(B^{-2} x+b)|^2\rho_0+ |1-\lambda|^2\dashint_{B_{2\theta}}|B^{-1}x|^2\rho_0\\
&\les\dashint_{B_{2\theta}} | T-(B^{-2} x+b)|^2\rho_0+\theta^2\lt(\theta^2 \E+\theta^{-2} [\rho_1]_{\alpha,1}^2\rt),
\end{align*}
where in the last line we used \eqref{estimlambdaeps} with $\delta=\theta$ and the fact that $\rho_0\les 1$ on $B_1$.
We split the first term on the right-hand side into three terms
\begin{align*}
\lefteqn{\dashint_{B_{2\theta}} | T-(B^{-2} x+b)|^2\rho_0}\nonumber\\
&\les \dashint_{ B_{2\theta}}|T-(x+\nabla \phi)|^2\rho_0+\dashint_{ B_{2\theta}}|(B^{-2}-Id-A)x|^2\rho_0
+\dashint_{ B_{2\theta}}|\nabla \phi-b-Ax|^2\rho_0\\
&\les \dashint_{ B_{2\theta}}|T-(x+\nabla \phi)|^2\rho_0+ \theta^2|B^{-2}-Id-A|^2
+\sup_{ B_{2\theta}}|\nabla \phi-b-Ax|^2.
\end{align*}
Recalling $B=e^{-A/2}$, $A=\nabla^2\phi(0)$, and $b=\nabla\phi(0)$, we obtain
\begin{align*}
\theta^{-2}\dashint_{B_{\theta}} |\hat{T}-x|^2\hat \rho_0
&\stackrel{\eqref{eq:estimdistharmLag}}{\les} \theta^{-(d+2)}\lt(\E^{\frac{d+2}{d+1}}+[\rho_0]_{\alpha,1}^2+[\rho_1]_{\alpha,1}^2\rt) +|\nabla^2 \phi(0)|^4+\theta^2 \sup_{B_{2\theta}} |\nabla^3 \phi|^2\\
&\qquad \qquad +\theta^2 \E+\theta^{-2} [\rho_1]_{\alpha,1}^2\\
&\stackrel{\eqref{Schauder} \& \eqref{eq:energieestimphiLag}}{\les} \theta^{-(d+2)}\E^{\frac{d+2}{d+1}}+  \lt(\E+[\rho_0]_{\alpha,1}^2+[\rho_1]_{\alpha,1}^2\rt)^2+\theta^2\lt(\E+[\rho_0]_{\alpha,1}^2+[\rho_1]_{\alpha,1}^2\rt)\\
&\qquad \qquad  +\theta^2 \E +\theta^{-2} \lt([\rho_0]_{\alpha,1}^2+[\rho_1]_{\alpha,1}^2\rt).
 \end{align*}
 Since $\frac{d+2}{d+1}<2$ and $\E+[\rho_0]_{\alpha,1}^2+[\rho_1]_{\alpha,1}^2\ll \theta^2$ (recall that $\theta$ has not been fixed yet), this simplifies to  
 \begin{equation}\label{estimT}
  \theta^{-2}\dashint_{B_{\theta}} |\hat{T}-x|^2 \hat \rho_0\les \theta^{-(d+2)}\E^{\frac{d+2}{d+1}}+ \theta^2 \E+ \theta^{-2} \lt([\rho_0]_{\alpha,1}^2+[\rho_1]_{\alpha,1}^2\rt).
 \end{equation}
We may thus find a constant $C(d,\alpha)>0$ such that 
\[
 \theta^{-2}\dashint_{B_\theta} |\hat{T}-x|^2\hat{\rho}_0\le C \lt(\theta^{-(d+2)}\E^{\frac{d+2}{d+1}} +\theta^2 \E\rt)+ \theta^{-2}\lt([\rho_0]_{\alpha,1}^2+[\rho_1]_{\alpha,1}^2\rt).
\]
 We now fix $\theta(d,\alpha,\alpha')$ such that $C\theta^2 \le \frac{1}{2} \theta^{2\alpha'}$, which is possible because $\alpha'<1$. If $\E$ is small enough,
 $C\theta^{-(d+2)}\E^{\frac{d+2}{d+1}}\le\frac{1}{2} \theta^{2\alpha'} \E$ and thus 
\[ \theta^{-2}\dashint_{B_\theta} |\hat{T}-x|^2\hat{\rho}_0\le \theta^{2\alpha'}\E+\theta^{-2}\lt([\rho_0]_{\alpha,1}^2+[\rho_1]_{\alpha,1}^2\rt).\]
\end{proof}
Equipped with the one-step-improvement of Proposition \ref{iter}, the next proposition is the outcome of a Campanato iteration
(see for instance \cite[Chap. 5]{Giaquinta} for an application of Campanato iteration to obtain Schauder theory for linear elliptic systems).
It is a Campanato iteration on the  $C^{1,\alpha}$ level for the transportation map $T$
and thus proceeds by comparing $T$ to affine maps. The main ingredient is the {\it affine} invariance of transportation.
Proposition \ref{P3} amounts to an $\eps$-regularity result.
\begin{proposition}\label{P3}
 Assume that $\rho_0(0)=\rho_1(0)=1$ and that 
 \begin{equation}\label{hyp:ErhoR}
 \E(\rho_0,\rho_1,T,2R)+R^{2\alpha}([\rho_0]_{\alpha, 2R}^2+[\rho_1]_{\alpha,2R}^2)\ll 1, 
 \end{equation}
 then $T$ is of class $C^{1,\alpha}$ in $B_{R}$, with
 \[
  [\nabla T]_{\alpha,R}\les R^{-\alpha}\E(T,2R)^{1/2}+ [\rho_0]_{\alpha,2R}+[\rho_1]_{\alpha,2R}.
 \]

\end{proposition}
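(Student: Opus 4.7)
The plan is to iterate the one-step improvement of Proposition~\ref{iter} at every base point of $B_R$ and then read off $C^{1,\alpha}$ regularity via the Campanato characterization of Hölder spaces; the invariance of the transportation problem under affine rescalings of source and target, built into Proposition~\ref{iter}, is what makes this iteration feasible.

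Fix $\alpha' \in (\alpha, 1)$ and let $\theta = \theta(d,\alpha,\alpha')$, $C_\theta$ be as in Proposition~\ref{iter}. I would first work at the origin: starting from $(\rho_0, \rho_1, T)$ at scale $r_0 := R$ with $E_0 := \E(\rho_0,\rho_1,T,R)$ and $H_0^2 := [\rho_0]_{\alpha,2R}^2 + [\rho_1]_{\alpha,2R}^2$, I apply Proposition~\ref{iter} inductively to obtain a sequence $(\rho_0^{(k)}, \rho_1^{(k)}, T^{(k)})$ at scales $r_k := \theta^k R$, normalized so that $\rho_0^{(k)}(0) = \rho_1^{(k)}(0) = 1$ and related to the previous data by an affine change of variables $(B_k, b_k, \lambda_k)$ obeying \eqref{boundBc}--\eqref{boundlambda}. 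Since these transformations are close to the identity, the Hölder seminorms of the renormalized densities are comparable to the originals, so that $r_k^{2\alpha}\bigl([\rho_0^{(k)}]_{\alpha,r_k}^2 + [\rho_1^{(k)}]_{\alpha,r_k}^2\bigr) \lesssim \theta^{2\alpha k} R^{2\alpha} H_0^2$.

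The decay estimate \eqref{improvementE} becomes, for $E_k := \E(\rho_0^{(k)},\rho_1^{(k)},T^{(k)},r_k)$,
\[
E_{k+1} \le \theta^{2\alpha'} E_k + C_\theta \theta^{2\alpha k} R^{2\alpha} H_0^2,
\]
which, since $\alpha' > \alpha$, yields by a standard geometric-sum argument $E_k \lesssim \theta^{2\alpha k}(E_0 + R^{2\alpha} H_0^2)$. This decay also ensures that the smallness hypothesis of Proposition~\ref{iter} is preserved at every step, so the iteration does not terminate. Composing the affine renormalizations I get cumulative maps $\mathcal{B}_k := B_1 B_2 \cdots B_k$ on the source side and the analogous affine map on the target; by \eqref{boundBc} the increments $|B_{k+1}-I| + r_k^{-1}|b_{k+1}|$ are bounded by $E_k^{1/2} \lesssim \theta^{\alpha k}$, which is summable, so the composite affine data converge to limits defining an affine map $L_0(x) = T(0) + M_0 x$ in the original coordinates. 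Translating the decay of $E_k$ back into the original coordinates and absorbing the telescoping error $|M_k - M_0| \lesssim \theta^{\alpha k}$, I obtain the Campanato-type estimate
\[
\frac{1}{r^{d+2}} \int_{B_r} |T - L_0|^2 \rho_0 \;\lesssim\; r^{2\alpha}\bigl(R^{-2-2\alpha} \E(\rho_0,\rho_1,T,2R) + H_0^2\bigr), \qquad 0 < r \le R.
\]

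Finally I rerun the entire iteration with $0$ replaced by an arbitrary $x_0 \in B_R$: after translating $x \mapsto x - x_0$, $y \mapsto y - T(x_0)$, the values $\rho_0(x_0)$ and $\rho_1(T(x_0))$ differ from $1$ by at most $O(R^\alpha H_0)$, and this defect is absorbed into the Hölder terms of Proposition~\ref{iter}. This produces, for every $x_0 \in B_R$, an affine map $L_{x_0}(x) = T(x_0) + M_{x_0}(x - x_0)$ with $\dashint_{B_r(x_0)} |T - L_{x_0}|^2 \lesssim r^{2+2\alpha}\bigl(R^{-2-2\alpha}\E(\rho_0,\rho_1,T,2R) + H_0^2\bigr)$ for every $r \le R$. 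By the Campanato characterization of $C^{1,\alpha}$ functions (see e.g.~\cite[Chap. 5]{Giaquinta}) this gives $T \in C^{1,\alpha}(B_R)$ with $\nabla T(x_0) = M_{x_0}$, and the claimed bound on $[\nabla T]_{\alpha,R}$ follows by comparing the approximants $L_{x_0}$ and $L_{x_1}$ at scale $r = |x_0 - x_1|$. The hard part will be verifying, across the infinite sequence of near-identity affine changes of variables, that the Hölder seminorms of the renormalized densities do not accumulate geometric constants so that the $H_0^2$ term in the recursion is genuinely bounded by the original data at every scale; a related subtlety is the off-center application of Proposition~\ref{iter}, which requires absorbing the defect $\rho_i - 1 = O(R^\alpha H_0)$ at the new center into the existing error terms.
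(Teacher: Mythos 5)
Your overall plan — iterate Proposition~\ref{iter} at each base point $x_0\in B_R$, pass to a Campanato-type decay estimate, and read off $C^{1,\alpha}$ regularity — is exactly the structure of the paper's proof. However, you flag the two genuinely delicate points and leave both of them unresolved, and the second one in particular is not a matter of bookkeeping but a place where the argument as you describe it would break.

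First, the off-center normalization. You propose to translate to $x_0$, note that $\rho_0(x_0)$, $\rho_1(T(x_0))$ differ from $1$ by $O(R^\alpha H_0)$, and ``absorb the defect into the Hölder terms of Proposition~\ref{iter}''. But Proposition~\ref{iter} (and the Propositions~\ref{Prop:BBharm},~\ref{prop:Lagestim} it relies on, as well as the displacement-convexity estimate~\eqref{eq:displconv}) are stated and proved \emph{only} under the exact hypothesis $\rho_0(0)=\rho_1(0)=1$, and their proofs use it as a strict equality, e.g.\ in deriving $\sup|\delrho|\les\gamma$ and $|\lambda-1|^2\les|b|^{2\alpha}[\rho_1]_{\alpha}^2$. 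One cannot simply feed in data with a defect at the center without reproving a perturbed version of the one-step improvement. The paper circumvents this cleanly: before iterating, it performs an \emph{exact} renormalization, replacing $\rho_0$ by $\rho_0(x_0)^{-1}\rho_0$, replacing $\rho_1$ by $\rho_1(x_0)^{-1}\rho_1\bigl(x_0+(\rho_0(x_0)/\rho_1(x_0))^{1/d}(\cdot-x_0)\bigr)$, and replacing $T$ by $x_0+(\rho_1(x_0)/\rho_0(x_0))^{1/d}(T-x_0)$. This preserves optimality and makes the normalized densities equal to $1$ at the new center \emph{exactly}, at the harmless cost of an order-one dilation. This exact affine invariance is the essential ingredient you are missing.

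Second, the accumulation of Hölder seminorms. You write $r_k^{2\alpha}\bigl([\rho_0^{(k)}]^2+[\rho_1^{(k)}]^2\bigr)\les\theta^{2\alpha k}R^{2\alpha}H_0^2$ and then admit at the end that verifying the implicit constant does not blow up across infinitely many near-identity changes of variables is ``the hard part''. The paper closes this by an explicit induction, \eqref{hypinduction}, showing $[\rho_i^k]_{\alpha,\theta^k R}\le(1+\theta^{k\alpha})[\rho_i^{k-1}]_{\alpha,\theta^{k-1}R}$; the point is that the per-step loss is not just ``$1+$small'' but $1+\theta^{k\alpha}$, which is \emph{geometrically} summable, so the infinite product $\prod_k(1+\theta^{k\alpha})$ is finite. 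To get the factor $\theta^{k\alpha}$ (rather than just smallness independent of $k$) one needs the decay of $\E_k$ already established at stage $k-1$; this is why the two parts of \eqref{hypinduction} must be proved simultaneously by induction, not sequentially. Your sketch asserts the conclusion of this induction without supplying the mechanism that prevents the constants from accumulating, so as written the argument is incomplete.
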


\begin{proof}
By Campanato's theory, see \cite[Th. 5.I]{campanato}, we have to prove that \eqref{hyp:ErhoR} implies
\begin{equation}\label{toproveC1alpha}
 \sup_{x_0\in B_R} \sup_{r\le \frac{1}{2}R} \min_{A,b} \frac{1}{r^{2(1+\alpha)}} \dashint_{B_r(x_0)}|T-(Ax+b)|^2 \les R^{-2\alpha} \E(T,2R)+ [\rho_0]^2_{\alpha, 2R}+[\rho_1]^2_{\alpha, 2R}. 
\end{equation}
Let us first notice that \eqref{hyp:ErhoR} implies that for every $x_0\in B_R$
 \begin{equation}\label{Ex0}
  \E:=R^{-2}\dashint_{B_R(x_0)} |T-x|^2\rho_0 \ll 1 \qquad \textrm{and} \qquad R^{\alpha}\lt( [\rho_0]_{\alpha,2R}+[\rho_1]_{\alpha,2R}\rt)\ll1 .
 \end{equation}
 Therefore, in order to prove \eqref{toproveC1alpha}, it is enough to prove that \eqref{Ex0} implies that for $r\le \frac{1}{2}R$,
 \begin{equation}\label{toproveEx0}
  \min_{A,b} \frac{1}{r^{2}} \dashint_{B_r(x_0)}|T-(Ax+b)|^2\les r^{2\alpha}\lt(R^{-2\alpha}\E +[\rho_0]_{\alpha,2R}^2+[\rho_1]_{\alpha,2R}^2\rt).
 \end{equation}
 Replacing $\rho_0$ by $\rho_0(x_0)^{-1}\rho_0$ and $\rho_1$ by $\rho_1(x_0)^{-1}\rho_1(x_0+\lt(\frac{\rho_0(x_0)}{\rho_1(x_0)}\rt)^{\frac{1}{d}}(\cdot-x_0))$ and thus
 $T$ by $x_0+\lt(\frac{\rho_1(x_0)}{\rho_0(x_0)}\rt)^{\frac{1}{d}}(T-x_0)$ which still satisfies \eqref{Ex0} thanks to $\rho_0(0)=\rho_1(0)=1$ and \eqref{hyp:ErhoR}, we may assume that $\rho_0(x_0)=\rho_1(x_0)=1$. 
Without loss of generality we may thus assume that $x_0=0$. 

Fix from now on an $\alpha'\in(\alpha,1)$. Thanks to \eqref{Ex0}, Proposition \ref{iter} applies and 
there exist a (symmetric) matrix $B_1$ of unit determinant, a vector $b_1$ and a positive number $\lambda_1$ such that $T_1(x):= \lambda_1 B_1(T(B_1x)-b_1)$, $\rho_0^1(x):=\rho_0(B_1 x)$ and $\rho_1^1(x):=\lambda_1^{-d}\rho_1(\lambda_1^{-1}B_1^{-1} x +b_1)$ satisfy
 \begin{equation}\label{estimEk1}\E_1:=\E(\rho_0^1,\rho_1^1,T_1,\theta R)\le \theta^{2\alpha'} \E+C_\theta R^{2\alpha}\lt([\rho_0]^2_{\alpha, R}+[\rho_1]^2_{\alpha, R}\rt).\end{equation}
 If $T$ is a  minimizer of \eqref{prob:OT}, then so is  $T_1$ with $(\rho_0,\rho_1)$ replaced by $(\rho_0^1,\rho_1^1)$. Indeed,  because $\det B_1=1$, $T_1$ sends $\rho_0^1$ on $\rho_1^1$  and if $T$ is the gradient of a convex function $\psi$
 then  $T_1=\nabla \psi_1$ where $\psi_1(x):=\lambda_1(\psi(B_1x)-b_1  \cdot B_1 x)$ is also a convex function, which characterizes optimality \cite[Th. 2.12]{Viltop}. 
 Moreover, for $i=0,1$
 \begin{equation}\label{preservboundHolder}
  [\rho_i^1]_{\alpha, \theta R}\le \lt(1+C(\E^{1/2}+R^\alpha [\rho_0]_{\alpha,R}+R^\alpha [\rho_0]_{\alpha,R})\rt)[\rho_i]_{\alpha,R}.
\end{equation}
Indeed, (we argue only for $\rho_1^1$ since the proof for $\rho_0^1$ is simpler), using that $\lambda_1^{-1} B_1^{-1}B_{\theta R} +b_1\subset B_R$ by \eqref{boundBc} and \eqref{boundlambda},
\begin{align*}
 [\rho_1^1]_{\alpha, {\theta R}}&=\lambda_1^{-d} \sup_{x,y\in B_{\theta R}} \frac{|\rho_1(\lambda_1^{-1}B_1^{-1}x+b_1)-\rho_1(\lambda_1^{-1}B_1^{-1}y+b_1)|}{|x-y|^\alpha}\\
 &=\lambda_1^{-d} \sup_{x,y\in B_{\theta R}} \frac{|\rho_1(\lambda_1^{-1}B_1^{-1}x+b_1)-\rho_1(\lambda_1^{-1}B_1^{-1}y+b_1)|}{|\lambda_1 B_1[ (\lambda_1^{-1}B_1^{-1}x+b_1)-(\lambda_1^{-1}B_1^{-1}y+b_1)]|^\alpha}\\
 &\le\lambda_1^{-d}|\lambda_1^{-1}B_1^{-1}|^\alpha\sup_{x,y\in B_{\theta R}} \frac{|\rho_1(\lambda_1^{-1}B_1^{-1}x+b_1)-\rho_1(\lambda_1^{-1}B_1^{-1}y+b_1)|}{| (\lambda_1^{-1}B_1^{-1}x+b_1)-(\lambda_1^{-1}B_1^{-1}y+b_1)|^\alpha}\\
 &\le \lambda_1^{-(d+\alpha)}|B_1^{-1}|^\alpha\sup_{x',y'\in B_{ R}} \frac{|\rho_1(x')-\rho_1(y')|}{| x'-y'|^\alpha}\\
 &=\lambda_1^{-(d+\alpha)}|B_1^{-1}|^\alpha[\rho_1]_{\alpha,R}.
\end{align*}
By \eqref{boundBc} and \eqref{boundlambda}, we get \eqref{preservboundHolder}. \\
 Therefore, we may iterate  Proposition \ref{iter} $K>1$ times
 to find a sequence of (symmetric) matrices $B_k$ with $\det B_k=1$, 
 a sequence of vectors $b_k$, a sequence of real numbers $\lambda_k$ and a sequence of maps $T_k$ such that setting for $1\le k\le K$,
 \[T_k(x):=\lambda_k B_k(T_{k-1}(B_k x)-b_k), \quad \rho_0^k(x):=\rho_0^{k-1}(B_k x) \quad \textrm{and} \quad \rho_1^{k}(x):=\lambda_k^{-d}\rho_1^{k-1}(\lambda_k^{-1}B_k^{-1} x +b_k),\] 
 it holds $\rho_0^k(0)=\rho_1^k(0)=1$ and 
 \begin{align}
 \E_k:=\E(\rho_0^k,\rho_1^k,T_k,\theta^k R)&\le \theta^{2\alpha' }\E_{k-1}+C_\theta \theta^{2(k-1)\alpha} R^{2\alpha} \lt([\rho_0^{k-1}]^2_{\alpha,\theta^{k-1}R}+[\rho_1^{k-1}]^2_{\alpha,\theta^{k-1}R}\rt),\label{estimEk}\\
 |B_k-Id|^2&\les\E_{k-1} +\theta^{2k\alpha} R^{2\alpha} \lt([\rho_0^{k-1}]^2_{\alpha,\theta^{k-1}R}+[\rho_1^{k-1}]^2_{\alpha,\theta^{k-1}R}\rt), \label{estimBk}\\
 \frac{1}{(\theta^{k-1} R)^2} |b_k|^2&\les \E_{k-1} +\theta^{2k\alpha} R^{2\alpha} \lt([\rho_0^{k-1}]^2_{\alpha,\theta^{k-1}R}+[\rho_1^{k-1}]^2_{\alpha,\theta^{k-1}R}\rt),\label{estimbk}\\
  |\lambda_k-1|^2&\les \E_{k-1} + \theta^{2k\alpha}R^\alpha [\rho^{k-1}_1]^2_{\alpha,\theta^{k-1}R}.\label{estimlambdak}
 \end{align}
Arguing as for \eqref{preservboundHolder}, we obtain that there exists $C_1(d,\alpha)>0$ such that 
\begin{equation}\label{preservboundHolderk}
 [\rho_i^k]_{\alpha, {\theta^k R}}\le \lt(1+C_1(\E_{k-1}^{1/2}+R^\alpha \theta^{k\alpha}\lt([\rho^{k-1}_0]_{\alpha,{\theta^{k-1}R}}+ [\rho^{k-1}_0]_{\alpha,{\theta^{k-1}R}}\rt)\rt)[\rho^{k-1}_i]_{\alpha, \theta^{k-1} R}.
\end{equation}

Let us prove by induction that the above  together with \eqref{Ex0} implies  that there exists $C_2(d,\alpha,\alpha')>0$ such that  for every $1\le k\le K$, 
\begin{equation}\label{hypinduction}
[\rho^k_i]_{\alpha, \theta^k R}\le (1+\theta^{k\alpha})[\rho^{k-1}_i]_{\alpha,{\theta^{k-1}R}}, \quad \theta^{-2k\alpha }\E_{k}\le C_2\lt( \E+R^{2\alpha}[\rho_0]^2_{\alpha, R}+R^{2\alpha}[\rho_1]^2_{\alpha, R}\rt). 
\end{equation}
 This will show that we can keep on iterating Proposition \ref{iter}.\\

By \eqref{estimEk1} and \eqref{preservboundHolder}, \eqref{hypinduction} holds for $K=1$ provided $C_2\ges C_\theta \theta^{-2\alpha}$. Assume that it holds for $K-1$. Let us start by the first part of \eqref{hypinduction}. Notice that the induction hypothesis implies that 
\begin{equation}\label{estimHolderrhoi}
 [\rho^{K-1}_i]_{\alpha, \theta^{K-1} R}\le \prod_{k=1}^{K-2} (1+\theta^{k\alpha}) [\rho_i]_{\alpha,R}\le C_3 [\rho_i]_{\alpha,R},
\end{equation}
where $C_3:=\prod_{k=1}^\infty (1+\theta^{k\alpha})<\infty$.  From   \eqref{hypinduction} and  \eqref{estimHolderrhoi} for $k=K-1$ we learn that we may choose the implicit small constant in \eqref{Ex0} such that we have 
\[
 C_1\lt( \theta^{-\alpha} \lt(\sup_{1\le k\le K-1} \theta^{-2k \alpha } \E_k\rt)^{1/2} +R^\alpha \sup_{1\le k\le K-1} [\rho_0^k]_{\alpha, \theta^{k} R}+[\rho_1^k]_{\alpha, \theta^{k} R}\rt)\le 1.
\]
Plugging this into \eqref{preservboundHolderk}, we obtain the first part of \eqref{hypinduction} for $k=K$. \\
Let us now turn to the second part of \eqref{hypinduction}. Dividing \eqref{estimEk} by $\theta^{2k\alpha}$ and taking the $\sup$ over $k\in [1,K]$, we obtain by \eqref{estimHolderrhoi},
\[
 \sup_{1\le k\le K} \theta^{-2k \alpha} \E_k\le \theta^{2(\alpha'-\alpha)}( \E+ \sup_{1\le k\le K-1} \theta^{-2k \alpha} \E_k )+C_\theta C_3^2R^{2\alpha}\lt([\rho_0]^2_{\alpha, R}+[\rho_1]^2_{\alpha, R}\rt).
\]
Since $\alpha'>\alpha$, $\theta^{2(\alpha'-\alpha)}<1$ and thus
\[
\sup_{1\le k\le K} \theta^{-2k \alpha} \E_k\le ( 1-\theta^{2(\alpha'-\alpha)})^{-1}\lt[\E+C_\theta C_3^2R^{2\alpha}\lt([\rho_0]^2_{\alpha, R}+[\rho_1]^2_{\alpha, R}\rt)\rt].
\]
Choosing $C_2:=( 1-\theta^{2(\alpha'-\alpha)})^{-1}\max\lt\{1, C_\theta C_3^2\rt\}$ we see that also the second part of \eqref{hypinduction} holds for $k=K$. \\

 Letting $\Lambda_k:=\prod_{i=1}^k\lambda_i$, $A_k:=B_kB_{k-1}\cdots B_1$  and $d_k:= \sum_{i=1}^k (\lambda_k B_k)(\lambda_{k-1} B_{k-1})\cdots (\lambda_iB_i) b_i$, we see that  $T_k(x)=\Lambda_k A_kT( A_k^*x)-d_k$. By \eqref{estimBk}, \eqref{hypinduction} and \eqref{estimHolderrhoi},  
\begin{equation}\label{estimAk}|A_k-Id|^2\les  \E +R^{2\alpha} [\rho_0]^2_{\alpha, R}+R^{2\alpha} [\rho_1]^2_{\alpha, R}\ll1,\end{equation}
so that  $B_{\frac{1}{2}\theta^kR}\subset A_k^* (B_{\theta^k R})$. By the same reasoning, we obtain from \eqref{estimlambdak},
\begin{equation}\label{estimlambdakprod}
 |\Lambda_k-1|\ll1.
\end{equation}
We then conclude by definition of $T_k$ that 
\begin{align*}
 \min_{A,b} \frac{1}{(\frac{1}{2}\theta^{k}R)^2} \dashint_{B_{\frac{1}{2}\theta^kR}}|T-(Ax+b)|^2&\les \frac{1}{({\theta}^{k} R)^2} \dashint_{A_k^*(B_{{\theta}^k R})}|T-\Lambda_k^{-1}A_k^{-1}A_k^{-*}x+\Lambda_k^{-1}A_k^{-1}d_k)|^2\\
 &=\frac{1}{({\theta}^{k} R)^2} \dashint_{B_{{\theta}^k R}}|A_k^{-1}\Lambda_k^{-1}(T_k-x)|^2 \\
 &\stackrel{\eqref{estimAk}\&\eqref{estimlambdakprod}}{\les} \frac{1}{({\theta}^{k} R)^2} \dashint_{B_{{\theta}^k R}}|T_k-x|^2\\
 &\stackrel{\eqref{hypinduction}}{\les} \theta^{2k \alpha}\lt(\E+R^{2\alpha}[\rho_0]^2_{\alpha, R}+R^{2\alpha}[\rho_1]^2_{\alpha, R}\rt).
\end{align*}
From this \eqref{toproveEx0} follows, which concludes the proof of \eqref{toproveC1alpha}.
 
\end{proof}

With this $\eps$-regularity result at hand, we  now may prove that $T$ is a $C^{1,\alpha}$ diffeomorphism outside of a set of measure zero.
\begin{theorem}\label{theo:final}
 For $E$ and $F$ two bounded open sets, let $\rho_0:E\to \R^+$ and $\rho_1:F\to \R^+$ be two $C^{0,\alpha}$  densities with equal masses, both bounded  and bounded away from zero and let $T$ be the minimizer of \eqref{prob:OT}. 
 There exist
 open sets $E'\subset E $ and $F'\subset F$
 with $|E\backslash E'|=|F\backslash F'|=0$ and such that $T$ is a $C^{1,\alpha}$ diffeomorphism between $E'$ and $F'$.
\end{theorem}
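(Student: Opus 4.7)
The plan is to localize the $\eps$-regularity result of Proposition \ref{P3} at Lebesgue/Alexandrov points and then patch. By Alexandrov's theorem \cite[Th. 14.25]{VilOandN}, the Brenier potential $\psi$ with $T=\nabla\psi$ is twice differentiable at a.e. $x_0\in E$, with $\nabla T(x_0)=:A_0$ symmetric and positive semi-definite; combined with the Monge-Amp\`ere relation $\rho_1(T(x_0))\det A_0 =\rho_0(x_0)$, the assumption that $\rho_0,\rho_1$ are bounded and bounded away from zero on their supports forces $A_0$ to be positive definite. Discarding a set of measure zero, we may further suppose that $x_0$ is a Lebesgue point of $\rho_0$ and that $T(x_0)$ is a Lebesgue point of $\rho_1$, still covering a full-measure subset of $E$.

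Fix such an $x_0$ and perform the affine normalization already used in the proof of Proposition \ref{P3}: write $y_0:=T(x_0)$ and replace $\rho_0,\rho_1,T$ by
\begin{equation*}
\tilde\rho_0(z):=\rho_0(x_0)^{-1}\rho_0\bigl(x_0+A_0^{-1/2}z\bigr), \quad \tilde\rho_1(w):=\rho_1(y_0)^{-1}\rho_1\bigl(y_0+A_0^{1/2}w\bigr),
\end{equation*}
and $\tilde T(z):=A_0^{-1/2}(T(x_0+A_0^{-1/2}z)-y_0)$. Since $\det A_0=\rho_0(x_0)/\rho_1(y_0)$, the push-forward identity is preserved and $\tilde T$ is again the Brenier map between $\tilde\rho_0$ and $\tilde\rho_1$; moreover $\tilde\rho_0(0)=\tilde\rho_1(0)=1$, $\nabla\tilde T(0)=Id$ and $[\tilde\rho_i]_{\alpha,R}$ remains bounded by a multiple of $[\rho_i]_{\alpha}$. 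Because $\tilde T$ is differentiable at $0$ with $\nabla\tilde T(0)=Id$, one has $\tilde T(z)=z+o(|z|)$, whence
\begin{equation*}
\E(\tilde\rho_0,\tilde\rho_1,\tilde T,2R)+R^{2\alpha}\bigl([\tilde\rho_0]_{\alpha,2R}^2+[\tilde\rho_1]_{\alpha,2R}^2\bigr)\longrightarrow 0\quad \text{as } R\to 0.
\end{equation*}
For $R$ small enough the hypothesis \eqref{hyp:ErhoR} of Proposition \ref{P3} therefore holds, yielding $\tilde T\in C^{1,\alpha}(B_R)$, and inverting the affine change of variables we conclude that $T\in C^{1,\alpha}$ in a neighborhood $U_{x_0}$ of $x_0$.

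Setting $E':=\bigcup U_{x_0}$ gives an open subset of $E$ on which $T$ is locally $C^{1,\alpha}$, and the argument above shows $|E\setminus E'|=0$. To promote this to a diffeomorphism statement, I would apply the same procedure to $T^{-1}$, which by \eqref{eq:inverseT} is the optimal map from $\rho_1$ to $\rho_0$ and satisfies the symmetric hypotheses of Theorem \ref{theoepsintro}; this produces an open set $\tilde F\subset F$ of full measure on which $T^{-1}\in C^{1,\alpha}$. Taking $F':=T(E')\cap \tilde F$ (open since $T$ is continuous on $E'$) and $E'':=E'\cap T^{-1}(F')$, the $C^{1,\alpha}$ Jacobian of $T$ at any point of $E''$ is nondegenerate by the Monge-Amp\`ere identity and the bounds on $\rho_0,\rho_1$, so by the inverse function theorem $T$ restricts to a $C^{1,\alpha}$-diffeomorphism $E''\to F'$; both sets have full measure since $T,T^{-1}$ transport bounded nondegenerate densities. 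The only real subtlety is the bookkeeping across the affine reduction, namely checking that the H\"older seminorms transform acceptably (as already handled in \eqref{preservboundHolder}) and that the $o(|z|)$ differentiability at a single point truly makes \eqref{hyp:ErhoR} small on \emph{balls} around $0$; this follows because differentiability implies $\dashint_{B_R}|\tilde T-z|^2\tilde\rho_0=o(R^2)$ and $\tilde\rho_0\lesssim 1$.
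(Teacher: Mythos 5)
Your proposal is correct and takes essentially the same route as the paper: select Alexandrov points where $\nabla T$ exists and is nondegenerate, apply the affine normalization so that $\hat\rho_0(0)=\hat\rho_1(0)=1$ and $\nabla\hat T(0)=Id$, observe that pointwise differentiability forces the excess in \eqref{hyp:ErhoR} to vanish as $R\to0$, and invoke Proposition \ref{P3}. The only cosmetic difference is one of bookkeeping: the paper pre-intersects the Alexandrov-good sets for $T$ and $T^{-1}$ (taking $E'=E_1\cap T^{-1}(F_1)$) so that both maps are normalized and proved $C^{1,\alpha}$ around the \emph{same} base point in one pass, whereas you build the two open $C^{1,\alpha}$-sets for $T$ and $T^{-1}$ separately and intersect afterward — the two constructions yield the same kind of full-measure open sets.
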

\begin{proof}
By  the Alexandrov Theorem \cite[Th. 14.25]{VilOandN}, 
there exist two sets of full measure $E_1\subset E$ and $F_1\subset F$ such that for all $(x_0,y_0)\in E_1\times F_1$, $T$ and $T^{-1}$
are differentiable at $x_0$ and $y_0$, respectively, in the sense that there exist $A, B$ symmetric such that for a.e. $(x,y)\in E\times F$,
 \begin{equation}\label{eq:difT}
  T(x)=T(x_0)+ A(x-x_0)+o(|x-x_0|) \qquad \textrm{and } \qquad T^{-1}(y)=T^{-1}(y_0)+ B(y-y_0)+o(|y-y_0|).
 \end{equation}
Moreover, we may assume that \eqref{eq:inverseT}
holds for every $(x_0,y_0)\in E_1\times F_1$. 
Using \eqref{eq:inverseT}, it is not hard to show that if $T(x_0)=y_0$, then $A=B^{-1}$ and then by \eqref{eq:Jacob} 
\begin{equation}\label{eq:Jacobfinal}
\rho_1(y_0) \det A=\rho_0(x_0).
\end{equation}
We finally let $E':= E_1\cap T^{-1}(F_1)$ and $F':=T(E')=F_1\cap T(E_1)$. 
Notice that since $T$ sends sets of measure zero to sets of measure zero, $|E\backslash E'|=|F\backslash F'|=0$. 
We are going to prove that $E'$ and $F'$ are open sets and that $T$ is a $C^{1,\alpha}$ diffeomorphism from $E'$ to $F'$.\\
Let $x_0\in E'$, and thus automatically $y_0:=T(x_0)\in F'$, be given; we shall prove that $T$ is of class $C^{1,\alpha}$  in a neighborhood of $x_0$.
By \eqref{eq:difT} and the fact that $\rho_0$ and $\rho_1$ are bounded  we have in particular
\begin{equation}\label{hyp:epsBRx0}
\lim_{R\to 0} \frac{1}{R^2} \dashint_{B_R(x_0)} |T-y_0-A(x-x_0)|^2 \rho_0=0.
\end{equation}
We make the change of variables $ x=A^{-1/2}\hat{x} +x_0 $, $ y= A^{1/2}\hat{y}+y_0$, which leads to  $\hat T(\hat x):=A^{-1/2}( T(x)-y_0)$,
and then define $\hat \rho_0(\hat x):=\rho_0(x_0)^{-1}\rho_0(x)$ and $\hat \rho_1(\hat y):=  \rho_0(x_0)^{-1} \det^{-2} A \rho_1(y)$.
Note  that
$\hat T$ is the optimal transportation map between $\hat \rho_0$ and $\hat \rho_1$ (indeed, if $T=\nabla \psi$ for a convex function $\psi$,
then $\hat{T}= \hat{\nabla} \hat{\psi}$, where $\hat \psi(\hat x)=\psi(x)- y_0\cdot \hat x$) and that by \eqref{eq:Jacobfinal}, $\hat \rho_0(0)=\hat \rho_1(0)=1$. Moreover, 
since $\rho_0$ and $\rho_1$ are bounded and bounded away from zero, $\hat \rho_0$ and $\hat \rho_1$ are $C^{0,\alpha}$ continuous with H\"older semi-norms controlled by the ones of $\rho_0$ and $\rho_1$, so that 
\[
 \lim_{R\to 0} R^\alpha\lt([\hat \rho_0]_{\alpha, B_R}+ [\hat \rho_1]_{\alpha,R}\rt)=0.
\]
 Finally, the change of variables is made such that (\ref{hyp:epsBRx0}) turns into
\begin{align*}
\lim_{R\to 0} \frac{1}{R^2} \dashint_{B_R} |\hat{T}-\hat{x}|^2 \hat \rho_0=0.
\end{align*}
 Hence, we may apply Proposition \ref{P3} to $\hat{T}$ to obtain that $\hat{T}$ is of class $C^{1,\alpha}$ in a neighborhood of zero. 
Similarly, we obtain that $\hat{T}^{-1}$ is $C^{1,\alpha}$ in a neighborhood of zero.
Going back to the original map, this means that $T$ is a $C^{1,\alpha}$ diffeomorphism of a neighborhood $U$ of $x_0$ on the neighborhood $T(U)$ of $T(x_0)$. 
In particular, $U\times T(U)\subset E'\times F'$ so that $E'$ and $F'$ are both open and thanks to \eqref{eq:inverseT}, $T$ is a global $C^{1,\alpha}$ diffeomorphism from $E'$ to $F'$. \end{proof}

\begin{remark}
 If $\psi$ is a  convex function function such that $\nabla \psi =T$, Theorem \ref{theo:final} shows that in $\psi\in C^{2,\alpha}(E')$ and it solves (in the classical sense) the Monge-Amp\`ere equation which is now a uniformly elliptic equation. 
 If the densities are more regular then by the Evans-Krylov Theorem (see \cite{CafCab}) and Schauder estimates  we may obtain higher regularity of $T$.  
\end{remark}

\section*{Acknowledgment}
Part of this research was funded by the program PGMO of the FMJH through the  project COCA. The hospitality of the IHES and of the MPI-MIS are gratefully acknowledged.
\bibliographystyle{amsplain}
\bibliography{OT}

\providecommand{\bysame}{\leavevmode\hbox to3em{\hrulefill}\thinspace}
\providecommand{\MR}{\relax\ifhmode\unskip\space\fi MR }
\providecommand{\MRhref}[2]{%
  \href{http://www.ams.org/mathscinet-getitem?mr=#1}{#2}
}
\providecommand{\href}[2]{#2}
\begin{thebibliography}{10}

\bibitem{ACO}
G.~Alberti, R.~Choksi, and F.~Otto, \emph{Uniform energy distribution for an
  isoperimetric problem with long-range interactions}, J. Amer. Math. Soc.
  \textbf{22} (2009), no.~2, 569--605.

\bibitem{AFP}
L.~Ambrosio, N.~Fusco, and D.~Pallara, \emph{Functions of bounded variation and
  free discontinuity problems}, Oxford Mathematical Monographs, The Clarendon
  Press, Oxford University Press, New York, 2000.

\bibitem{AGS}
L.~Ambrosio, N.~Gigli, and G.~Savar\'e, \emph{Gradient flows in metric spaces
  and in the space of probability measures}, second ed., Lectures in
  Mathematics ETH Z\"urich, Birkh\"auser Verlag, Basel, 2008.

\bibitem{Brezis}
H.~Br\'ezis, \emph{Op\'erateurs maximaux monotones et semi-groupes de
  contractions dans les espaces de {H}ilbert}, North-Holland Publishing Co.,
  Amsterdam-London; American Elsevier Publishing Co., Inc., New York, 1973,
  North-Holland Mathematics Studies, No. 5. Notas de Matem\'atica (50).

\bibitem{CafAoM90bis}
L.~A. Caffarelli, \emph{A localization property of viscosity solutions to the
  {M}onge-{A}mp\`ere equation and their strict convexity}, Ann. of Math. (2)
  \textbf{131} (1990), no.~1, 129--134.

\bibitem{CafJAMS92}
\bysame, \emph{The regularity of mappings with a convex potential}, J. Amer.
  Math. Soc. \textbf{5} (1992), no.~1, 99--104.

\bibitem{CafCab}
L.~A. Caffarelli and X.~Cabr\'e, \emph{Fully nonlinear elliptic equations},
  American Mathematical Society Colloquium Publications, vol.~43, American
  Mathematical Society, Providence, RI, 1995.

\bibitem{campanato}
S.~Campanato, \emph{Propriet\`a di una famiglia di spazi funzionali}, Ann.
  Scuola Norm. Sup. Pisa (3) \textbf{18} (1964), 137--160.

\bibitem{DePFig}
G.~De~Philippis and A.~Figalli, \emph{Partial regularity for optimal transport
  maps}, Publ. Math. Inst. Hautes \'Etudes Sci. \textbf{121} (2015), 81--112.

\bibitem{FigKim}
A.~Figalli and Y.-H. Kim, \emph{Partial regularity of {B}renier solutions of
  the {M}onge-{A}mp\`ere equation}, Discrete Contin. Dyn. Syst. \textbf{28}
  (2010), no.~2, 559--565.

\bibitem{Giaquinta}
M.~Giaquinta and L.~Martinazzi, \emph{An introduction to the regularity theory
  for elliptic systems, harmonic maps and minimal graphs}, second ed., Appunti.
  Scuola Normale Superiore di Pisa (Nuova Serie) [Lecture Notes. Scuola Normale
  Superiore di Pisa (New Series)], vol.~11, Edizioni della Normale, Pisa, 2012.

\bibitem{Maggi}
F.~Maggi, \emph{Sets of finite perimeter and geometric variational problems},
  Cambridge Studies in Advanced Mathematics, vol. 135, Cambridge University
  Press, Cambridge, 2012, An introduction to geometric measure theory.

\bibitem{McCann}
R.~McCann, \emph{A convexity theory for interacting gases and equilibrium
  crystals}, PhD Thesis, Princeton Univ. (1994).

\bibitem{Merl}
B.~Merlet, \emph{A highly anisotropic nonlinear elasticity model for vesicles
  {II}: {D}erivation of the thin bilayer bending theory}, Arch. Ration. Mech.
  Anal. \textbf{217} (2015), no.~2, 681--740.

\bibitem{Nardi}
G.~Nardi, \emph{Schauder estimate for solutions of {P}oisson's equation with
  {N}eumann boundary condition}, Enseign. Math. \textbf{60} (2014), no.~3-4,
  421--435.

\bibitem{Santam}
F.~Santambrogio, \emph{Optimal transport for applied mathematicians}, Progress
  in Nonlinear Differential Equations and their Applications, vol.~87,
  Birkh\"auser/Springer, Cham, 2015, Calculus of variations, PDEs, and
  modeling.

\bibitem{Viltop}
C.~Villani, \emph{Topics in optimal transportation}, Graduate Studies in
  Mathematics, vol.~58, American Mathematical Society, Providence, RI, 2003.

\bibitem{VilOandN}
\bysame, \emph{Optimal transport. {O}ld and new}, Grundlehren der
  Mathematischen Wissenschaften [Fundamental Principles of Mathematical
  Sciences], vol. 338, Springer-Verlag, Berlin, 2009.

\end{thebibliography}
 \end{document}